\newtheorem{thm}{Theorem}[section]
\newtheorem{prop}{Proposition}[section]
\newtheorem{cor}{Corollary}[section]
\newtheorem{lemma}{Lemma}[section]
\theoremstyle{remark}
\newcommand{\C}{\mathbb{C}}
\newcommand{\D}{\Omega}
\newcommand{\Dc}{\overline{\Omega}}
\newcommand{\ep}{\varepsilon}
\newcommand{\zb}{\overline{z}}
 \numberwithin{equation}{section}
\title{Compactness of Hankel and Toeplitz operators on convex 
	Reinhardt domains in $\mathbb{C}^2$}
\author{Nazl\i{} Do\u{g}an}
\address[Nazl\i{} Do\u{g}an]{Fatih Sultan Mehmet Vakif University, 
    34445 Istanbul, Turkey}
\email{ndogan@fsm.edu.tr}
\author{S\"{o}nmez \c{S}ahuto\u{g}lu}
\address[S\"{o}nmez \c{S}ahuto\u{g}lu]{University of Toledo, 
    Department of Mathematics \& Statistics, 2801 W. Bancroft, 
    Toledo, OH 43606, USA}
\email{sonmez.sahutoglu@utoledo.edu}
\subjclass[2020]{Primary 47B35; Secondary 32A36}
\keywords{Berezin transform, Toeplitz operators, convex Reinhardt domains}
\date{\today}
\begin{document}

\begin{abstract} 
We study compactness of Hankel and Toeplitz operators on Bergman spaces 
of convex Reinhardt domains in $\mathbb{C}^2$ and we restrict the 
symbols to the class of functions that are continuous on the closure 
of the domain. We prove that Toeplitz operators as well as 
the Hermitian squares of Hankel operators are compact if and only 
if the Berezin transforms of  the operators vanish on the boundary 
of the domain.  
\end{abstract}

\maketitle
\section{Introduction}
Let $\D$ be a bounded domain in $\C^n$  and  $A^2(\D)$ denote the 
Bergman space on $\D$, the Hilbert space of holomorphic functions 
which are square integrable with respect to the Lebesque measure $dV$. 
Since $A^2(\D)$ is a closed subspace of $L^2(\D)$, there exists a 
bounded orthogonal projection $P:L^2(\D)\to A^2(\D)$,  called the 
Bergman projection. The Toeplitz operator $T_{\phi}:A^2(\D)\to A^2(\D)$ 
and the Hankel operator $H_{\phi}:A^2(\D)\to L^2(\D)$ are defined 
as $T_{\phi}f=P(\phi f)$ and $H_{\phi}f=(I-P)(\phi f)$,
respectively,  for $f\in A^2(\D).$ 
We define the normalized Bergman kernel as $k_z(w)=K(w,z)/\sqrt{K(z,z)}$ 
for $w,z\in \D$ where $K$ is the Bergman kernel of $\D$. Furthermore, 
the Berezin transform of a bounded linear map 
$T:A^2(\D)\to A^2(\D)$ is defined as 
$\widetilde{T}(z)=\langle Tk_z, k_z\rangle$ for $z\in \D$.

Compactness of operators in the Toeplitz algebra has been an active 
area of research. In \cite{RS24} it is shown that, for a bounded pseudoconvex 
domain $\D$ with Lipschitz boundary and $\phi\in C(\Dc)$, the Toeplitz 
operator $T_{\phi}$ is compact on  $A^2(\D)$, if and only if $\phi=0$ 
on the boundary of $\D$. This result was proven earlier on the ball  
in \cite{C73} and on the  polydisc in \cite{Le10T}. 

In this paper, we are interested in studying compactness of operators 
in terms of the boundary behavior of their Berezin transforms. 
The Axler-Zheng theorem, proven in \cite{AZ98}, states that a finite 
sum of finite products of Toeplitz operators on the Bergman space 
of the unit disc is compact if and only if the Berezin transform of 
the operator vanishes on the unit circle.  This result was generalized 
to the polydisc in \cite{E99} (and \cite[p. 232]{CKL09}), and the unit ball 
for the Toeplitz algebra in \cite[Theorem 9.5]{S07}. For further 
generalizations see \cite{MSW13,MW14,WX21}. 
The proofs of the papers mentioned in this paragraph depend on 
explicit formulas of the Bergman kernel or on its precise estimates.  

There is a second set of results that use the boundary geometry and 
several complex variables techniques, in particular the 
$\overline{\partial}$-Neumann problem, to prove Axler-Zheng type results. 
Namely, let $\mathcal{T}(\Dc)$ denote the norm closed algebra generated 
by $\{ T_{\phi} :\phi\in C(\Dc)\}$ and let $\D$ be a $C^{\infty}$-smooth 
bounded pseudoconvex domain in $\C^n$ on which the 
$\overline{\partial}$-Neumann operator is compact. Then 
$T\in \mathcal{T}(\Dc)$ is compact if and only if $\widetilde{T}=0$ 
on $b\D$ (see \cite{CS13,CS14}). The techniques used depend on the 
following geometric condition: the set of strongly pseudoconvex 
boundary points is dense in the boundary. A local and weighted 
version of the result was proven in \cite{CSZ18}. 

It is still an open question whether the Axler-Zheng  Theorem is true 
on all $C^{\infty}$-smooth bounded pseudoconvex domains in $\C^n$. 
In the hopes of discovering a new technique, in this paper, we choose 
a simple class of domains on which neither of the above mentioned 
methods can be applied. Namely, we will work on bounded convex 
Reinhardt domains  in $\C^2$.  There are no explicit formulas for  
Bergman kernels on these domains. Furthermore, the set of 
strongly pseudoconvex points is not necessarily  dense in the boundary. 
The techniques developed in this paper are inspired by 
 \cite{KZ95,Le10H,Zorboska2003}. We  obtain  an Axler-Zheng 
 type result for Toeplitz operators with symbols continuous on the 
 closure. The more general case seems more elusive at the moment.

A domain $\D\subseteq \C^n$ is called a complete Reinhardt domain 
if $\lambda\diamond  z=(\lambda_1z_1,\ldots,\lambda_nz_n)\in \D$ whenever 
$z=(z_1,\ldots,z_n)\in \D$ and 
$\lambda=(\lambda_1,\ldots,\lambda_n) \in \mathbb{D}^n$.  
One can show that convex Reinhardt domains are complete Reinhardt.

\begin{thm} \label{ThmMain1}
Let $\D$ be a bounded convex Reinhardt domain in $\C^2$  
and $\phi\in C(\overline{\D})$. Then $T_{\phi}$ is compact on 
$A^2(\D)$ if and only if $\widetilde{T}_{\phi}(z)\to 0$ as $z\to b\D$.
\end{thm}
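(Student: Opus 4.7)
The ($\Rightarrow$) direction is standard: on any bounded pseudoconvex domain we have $K(z,z) \to \infty$ as $z \to b\D$, and the reproducing identity $\langle f, k_z\rangle = f(z)/\sqrt{K(z,z)}$ gives $k_z \rightharpoonup 0$ weakly in $A^2(\D)$ as $z \to b\D$. Compactness of $T_{\phi}$ then forces $\widetilde{T_{\phi}}(z) = \langle T_{\phi} k_z, k_z\rangle \to 0$ on the boundary, so the entire content of the theorem is in the converse.

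For the converse, the strategy is to couple the criterion of \cite{RS24}---on a bounded pseudoconvex Lipschitz domain with $\phi \in C(\Dc)$, compactness of $T_\phi$ is equivalent to $\phi \equiv 0$ on $b\D$---with a boundary‐continuity statement for $\widetilde{\phi}$ (note that $\widetilde{T_\phi} = \widetilde{\phi}$). A bounded convex domain has Lipschitz boundary, so it is enough to exhibit, at each $\zeta \in b\D$, a sequence $z_k \in \D$ with $z_k \to \zeta$ along which $\widetilde{\phi}(z_k) \to \phi(\zeta)$: the hypothesis $\widetilde{\phi}(z_k) \to 0$ then forces $\phi(\zeta) = 0$ for every $\zeta \in b\D$, and \cite{RS24} supplies compactness.

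A natural choice of approach is the radial one $z_k = (1 - 1/k)\zeta$. Completeness of the Reinhardt structure gives $0 \in \D$, and a standard convex‐analysis fact places the open segment from $0$ to $\zeta$ inside $\D$, so $z_k \in \D$ for all $k$. Writing
\[
\widetilde{\phi}(z_k) - \phi(\zeta) = \int_{\D} \bigl(\phi(w) - \phi(\zeta)\bigr) |k_{z_k}(w)|^2\, dV(w)
\]
and splitting the integral at a small ball $B(\zeta, \delta)$, uniform continuity of $\phi$ controls the near contribution, and the problem reduces to the localization estimate
\[
\frac{1}{K(z_k, z_k)} \int_{\D \setminus B(\zeta,\delta)} |K(z_k, w)|^2\, dV(w) \longrightarrow 0 \qquad (k \to \infty)
\]
for every $\delta > 0$.

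Establishing this localization is what I expect to be the main obstacle, and it is where the Reinhardt structure of $\D$ should enter. The monomials $\{z^\alpha\}_{\alpha \in \mathbb{N}^2}$ form an orthogonal basis of $A^2(\D)$, giving $K(z,w) = \sum_{\alpha} z^\alpha \overline{w}^{\alpha}/c_\alpha$ with $c_\alpha = \|z^\alpha\|^2$; integrating $|K(z_k, w)|^2$ over a torus fiber in $w$ annihilates the cross terms and reduces the concentration question to a positive‐series problem on the convex Reinhardt shadow $\{(|w_1|, |w_2|) : w \in \D\} \subseteq \mathbb{R}_+^2$. Strongly convex boundary points should yield to standard Bergman kernel asymptotics; the delicate cases are weakly pseudoconvex boundary points (where $\D$ can be of infinite type and no closed formula for $K$ is available) and boundary points lying on a coordinate axis $\{z_j = 0\}$. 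Following the philosophy of \cite{KZ95,Le10H,Zorboska2003}, I would attempt to reduce these remaining cases, via the torus symmetries, to one‐variable concentration statements for positive‐coefficient power series $\sum a_n t^n$ as $t \to 1^{-}$, for which Tauberian‐style tools are available.
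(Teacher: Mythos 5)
Your forward direction and the overall frame (reduce to $\phi=0$ on $b\D$ and invoke \cite{RS24}) match the paper, but the reduction you propose for the converse has a genuine gap: the localization estimate
\[
\frac{1}{K(z_k,z_k)}\int_{\D\setminus B(\zeta,\delta)}|K(z_k,w)|^2\,dV(w)\longrightarrow 0
\]
is \emph{false} at exactly the boundary points where the theorem is hard, namely points $\zeta$ lying on a nontrivial analytic disc in $b\D$ (and every bounded convex Reinhardt domain in $\C^2$ that is not strictly convex has such discs; after normalization the paper's setup has $\mathbb{D}^2\subset\D\subset U$ with a vertical disc in $b\D$). Already on the bidisc this fails: for $\zeta=(1,0)$ and $z_k=(1-1/k,0)$ one computes $|k_{z_k}(w)|^2=(1-r_k^2)^2/\bigl(\pi^2|1-r_k w_1|^4\bigr)$ with $r_k=1-1/k$, which is independent of $w_2$; the measures $|k_{z_k}|^2\,dV$ converge weak-$*$ not to the point mass at $\zeta$ but to a probability measure spread over the whole disc $\{1\}\times\overline{\mathbb{D}}\subset b\mathbb{D}^2$. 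Consequently your argument only yields $\int\phi\,d\mu_\zeta=0$ for such a limit measure $\mu_\zeta$ supported on the disc through $\zeta$, which does not give $\phi(\zeta)=0$ for a general continuous $\phi$ (e.g.\ $\phi$ could be a nonzero function of $w_2$ with mean zero against $\mu_\zeta$). Your concluding paragraph gestures at Tauberian tools but still aims them at proving this concentration statement, which cannot be rescued.

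What the paper actually does at disc points is structurally different: it first decomposes $\phi$ into quasi-homogeneous components $\phi_J$ (Ces\`aro summation, Lemma \ref{LemSum} and Corollary \ref{CorZero} reduce the theorem to this case), observes that $T_{\phi_J}$ acts on monomials by $T_{\phi_J}e_\alpha=\lambda'_\alpha e_{\alpha+J}$, and uses the Tauberian theorem of Postnikov (Theorem \ref{T1}) applied to the power series in $|z_1|^2$ appearing in $\widetilde{T_{\phi_J}}$ to show $\lambda'_\alpha\to 0$ as $\alpha_1\to\infty$ (Lemma \ref{LemToepQhomo}); it then computes the limit of $\lambda'_\alpha$ explicitly as a constant times the moment $\int_0^1\varphi(1,y)y^{2\alpha_2+j_2+1}\,dy$ and invokes Stone--Weierstrass to conclude $\varphi(1,\cdot)=0$ on the disc. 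The peak-point/localization argument you describe is used in the paper only for the boundary points \emph{not} on any disc (via \cite[Proposition 3.2]{FS98} and \cite[Lemma 15]{CSZ18}), where it is valid. So your proposal handles the easy stratum of the boundary and misidentifies the mechanism needed on the hard stratum.
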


As a corollary we get the following. 

\begin{cor} \label{CorToeplitz}
Let $\D$ be a bounded convex Reinhardt domain  in $\C^2$ 
and $\phi\in C(\overline{\D})$. Then the following are equivalent:
\begin{itemize}
\item[(i)] $T_{\phi}$ is compact.
\item[(ii)] $T^*_{\phi}T_{\phi}$ is compact.
\item[(iii)] $\widetilde{T^*_{\phi}T_{\phi}}(z)=\|T_{\phi}k_z\|^2\to 0$ 
as $z\to b\D$.
\item[(iv)] $\widetilde{T_{\phi}}(z) \to 0$ as $z\to b\D$.
\item[(v)] $\phi=0$ on $b\D$.
\end{itemize}
\end{cor}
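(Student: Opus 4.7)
The plan is to close a cycle of implications
\[
(i) \Rightarrow (ii) \Rightarrow (iii) \Rightarrow (iv) \Rightarrow (i)
\]
and then tie (v) into the cycle via the already cited result of \cite{RS24}, which states that on bounded pseudoconvex domains with Lipschitz boundary and continuous symbols, compactness of $T_\phi$ is equivalent to $\phi|_{b\Omega}=0$. A bounded convex Reinhardt domain in $\C^2$ is certainly bounded pseudoconvex with Lipschitz boundary, so (i)$\Leftrightarrow$(v) is immediate from \cite{RS24}.

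\textbf{Steps.} First I would note that (i)$\Rightarrow$(ii) is trivial, since the composition of a compact operator with a bounded one is compact. For (ii)$\Rightarrow$(iii) I would invoke the standard fact that $k_z\to 0$ weakly in $A^2(\Omega)$ as $z\to b\Omega$: this reduces, via polarization and the density of polynomials in $A^2$ of a complete Reinhardt domain, to checking that $p(z)/\sqrt{K(z,z)}\to 0$ for every polynomial $p$, which in turn follows from boundedness of $p$ on $\overline\Omega$ together with $K(z,z)\to\infty$ as $z\to b\Omega$ (a known fact on bounded convex, hence pseudoconvex, domains). Once $k_z\rightharpoonup 0$, compactness of $T_\phi^\ast T_\phi$ forces $\|T_\phi^\ast T_\phi k_z\|\to 0$, and so
\[
\widetilde{T_\phi^\ast T_\phi}(z)=\langle T_\phi^\ast T_\phi k_z,k_z\rangle=\|T_\phi k_z\|^2\longrightarrow 0.
\]
The implication (iii)$\Rightarrow$(iv) is a one-line Cauchy--Schwarz estimate,
\[
|\widetilde{T_\phi}(z)|=|\langle T_\phi k_z,k_z\rangle|\leq \|T_\phi k_z\|\,\|k_z\|=\|T_\phi k_z\|,
\]
using $\|k_z\|=1$. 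Finally (iv)$\Rightarrow$(i) is exactly Theorem \ref{ThmMain1}, which closes the cycle.

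\textbf{Expected obstacle.} There is no serious obstacle here beyond Theorem \ref{ThmMain1}: the corollary is essentially a packaging result. The only step that requires care is the weak convergence $k_z\rightharpoonup 0$, since the boundary of a convex Reinhardt domain need not be smooth or strongly pseudoconvex. However, convexity yields Lipschitz boundary and $K(z,z)\to\infty$ at the boundary, and polynomials are dense in $A^2$ on complete Reinhardt domains, so the standard density argument suffices. Once this is in hand, the whole corollary follows quickly from Theorem \ref{ThmMain1} and \cite{RS24}.
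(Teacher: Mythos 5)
Your proposal is correct and matches the paper's own argument in all essentials: both establish (i)$\Leftrightarrow$(v) via \cite{RS24}, deduce (ii)$\Rightarrow$(iii) from the weak convergence $k_z\to 0$ as $z\to b\D$, get (iii)$\Rightarrow$(iv) by Cauchy--Schwarz, and close the loop with Theorem \ref{ThmMain1} (the paper phrases this as (iv)$\Rightarrow$(v), you as (iv)$\Rightarrow$(i), which is the same content). The only cosmetic difference is that you sketch the density argument for $k_z\rightharpoonup 0$ yourself, whereas the paper simply cites \cite[Lemma 1]{RS24}.
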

Recently, compactness of Hankel operators has been studied 
in terms of the behavior of the symbol on the boundary of the domain. 
See, for instance, 
\cite{Le10H,ClS18,CelikSahutogluStraube20H,CelikSahutogluStraube23,Zimmer23} 
and references therein.  In this paper we study it in terms of the Berezin transform.

\begin{thm} \label{ThmMain2}
Let $\D$ be a bounded convex Reinhardt domain  in $\C^2$ 
and $\phi\in C(\overline{\D})$. Then $\|H_{\phi}k_z\|^2\to 0$ as 
$z\to b\D$ if and only if $\phi\circ f$ is holomorphic for any 
holomorphic function $f:\mathbb{D}\to b\D$. 
\end{thm}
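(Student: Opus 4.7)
The plan is to read $\|H_\phi k_z\|^2$ as the $L^2$ distance from $\phi k_z$ to $A^2(\D)$: by Pythagoras one has
\[ \|H_\phi k_z\|^2 = \|\phi k_z\|_{L^2(\D)}^2 - \|T_\phi k_z\|_{A^2(\D)}^2 = \inf_{g\in A^2(\D)} \int_\D |\phi(w)k_z(w)-g(w)|^2\,dV(w). \]
Thus $\|H_\phi k_z\|^2\to 0$ as $z\to b\D$ says that, with respect to the probability measures $d\mu_z:=|k_z(w)|^2\,dV(w)$ which concentrate at $b\D$, the symbol $\phi$ is arbitrarily well approximated on the kernel support by holomorphic functions. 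The geometric condition that $\phi\circ f$ be holomorphic for every holomorphic $f:\mathbb{D}\to b\D$ says that $\phi$ is \emph{exactly} holomorphic along the analytic discs contained in $b\D$. The proof will link the two via the asymptotics of $d\mu_z$.

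\emph{Forward direction.} Suppose $\|H_\phi k_z\|^2\to 0$. Let $f:\mathbb{D}\to b\D$ be holomorphic; we may assume $f$ is non-constant. Fix $\zeta_0\in\mathbb{D}$, set $w_0=f(\zeta_0)$, and approach $w_0$ from inside $\D$ along $z_t=tw_0$ for $t\nearrow 1$, which stays in $\D$ by completeness and convexity. The crucial step is to show that $d\mu_{z_t}$ concentrates at $w_0$ \emph{anisotropically}: the mass contracts in the Levi-normal direction but retains genuine spreading along the complex-tangential direction along which $f(\mathbb{D})$ lies. Combined with the identity above, this forces the minimizers $T_\phi k_{z_t}/k_{z_t}$, viewed after restriction to a shrinking tube around $f(\mathbb{D})$, to converge to $\phi$ through holomorphic functions along the disc direction, whence $\phi\circ f$ is holomorphic at $\zeta_0$, and therefore on all of $\mathbb{D}$.

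\emph{Backward direction.} Conversely, assume $\phi\circ f$ is holomorphic for every such $f$, and suppose toward a contradiction that $\|H_\phi k_{z_n}\|^2\geq\varepsilon>0$ for some sequence $z_n\to z_0\in b\D$. If $b\D$ contains no holomorphic disc through $z_0$, then $d\mu_{z_n}\to\delta_{z_0}$ weakly and the elementary estimate $\|H_\phi k_z\|^2=\|H_{\phi-\phi(z_0)}k_z\|^2\leq\int|\phi-\phi(z_0)|^2\,d\mu_z$ forces the Hankel norm to vanish, contradicting $\varepsilon>0$. If instead $z_0$ lies on an analytic disc $f(\mathbb{D})\subseteq b\D$, then the $\mathbb{T}^2$-action and the flat edge of the Reinhardt base at $(|z_{0,1}|,|z_{0,2}|)$ let us decompose $\phi$ near $z_0$ as a sum of a function holomorphic in the disc direction (whose contribution to $\|H_\phi k_{z_n}\|^2$ is shown to vanish by a direct Cauchy-integral computation exploiting the Reinhardt structure of $k_{z_n}$) plus a $C(\overline{\D})$-error that vanishes on the disc (handled by the first case via Theorem~\ref{ThmMain1}), again a contradiction.

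The main obstacle is the lack of explicit Bergman kernel formulas on convex Reinhardt domains, which makes the anisotropic concentration of $d\mu_z$ near weakly pseudoconvex boundary points delicate to quantify. The plan leans on two compensating features: the Bergman kernel is diagonal in the monomial basis, reducing the needed asymptotics to one-dimensional moment integrals determined by the Reinhardt base; and convexity rigidly constrains the analytic discs in $b\D$ to those arising from coordinate-aligned flat edges of the base, so that only two classes of boundary geometry actually need to be analyzed. Balancing these against the scheme above is where the bulk of the work sits.
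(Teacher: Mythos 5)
Your reduction of $\|H_\phi k_z\|^2$ to the distance $\inf_{g\in A^2(\D)}\|\phi k_z-g\|^2$ is correct, and the backward direction is essentially sound (the paper disposes of it more cleanly by citing the known fact that the disc condition already forces $H_\phi$ to be compact, after which weak convergence $k_z\to 0$ finishes it). The genuine gap is in the forward direction, where the entire argument rests on two assertions that are exactly the hard content of the theorem and are not proved: (1) that the measures $|k_{z_t}|^2\,dV$ concentrate \emph{anisotropically}, i.e.\ that their weak-$*$ limit, restricted to the open boundary disc through $w_0$, dominates a multiple of area measure on that disc --- which amounts to a lower bound on $|K(w,z_t)|^2/K(z_t,z_t)$ for $w$ near the disc; and (2) that the minimizers $T_\phi k_{z_t}/k_{z_t}$, which are controlled only in $L^2(|k_{z_t}|^2dV)$, admit a subsequential limit that is holomorphic on the boundary disc and equals $\phi$ there. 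Step (2) would require interior (sup-norm on compacts) estimates for these quotients near the disc, which in turn requires precisely the two-sided kernel asymptotics of step (1); but on a general convex Reinhardt domain no explicit kernel formula is available, and the only comparison the geometry readily yields is a bounded ratio $K_{\mathbb{D}^2}(z,z)/K(z,z)$ on slices. Without these quantitative inputs, the phrase ``forces the minimizers \dots\ to converge to $\phi$ through holomorphic functions along the disc direction'' is a restatement of the conclusion, not an argument.

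The paper avoids this issue entirely by a different route: it first reduces to quasi-homogeneous symbols $\phi_J$ via Lemma \ref{Lem2Quasihomo} and Ces\`aro summation (Lemma \ref{LemSum}); for such symbols $H_\phi^*H_\phi$ is diagonal on monomials, so the hypothesis becomes a statement about an Abel-type mean of the eigenvalues $\lambda_\alpha$, and a Tauberian theorem (Theorem \ref{T1}) converts it into $\lambda_\alpha\to 0$ as $\alpha_1\to\infty$ (Lemma \ref{LemHankelSquareQhomo}); finally, explicit one-dimensional moment asymptotics over the Reinhardt shadow together with the equality case of the Cauchy--Schwarz inequality identify $\varphi(1,\cdot)$ as $ay^{j_2}$, i.e.\ $\phi_J$ is holomorphic along the disc (Proposition \ref{Prop3}). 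If you want to pursue your measure-theoretic scheme, the missing two-sided kernel estimates near a weakly pseudoconvex analytic disc in the boundary would have to be supplied first, and that is a substantial piece of work in its own right.
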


As a corollary we have the following. 

\begin{cor}\label{CorHankel}
Let $\D$ be a bounded convex Reinhardt domain  in $\C^2$ 
and $\phi\in C(\overline{\D})$. Then the following are equivalent: 
\begin{itemize}
\item[(i)] $H_{\phi}$ is compact.
\item[(ii)] $H^*_{\phi}H_{\phi}$ is compact.
\item[(iii)] $\widetilde{H^*_{\phi}H_{\phi}}(z)=\|H_{\phi}k_z\|^2\to 0$ 
as $z\to b\D$.
\item[(iv)] $\phi\circ f$ is holomorphic for any holomorphic function 
$f:\mathbb{D}\to b\D$. 
\end{itemize}
\end{cor}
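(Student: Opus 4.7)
The plan is to establish the equivalences via the cycle (i)$\Rightarrow$(ii)$\Rightarrow$(iii)$\Leftrightarrow$(iv)$\Rightarrow$(i). The identity $\widetilde{H_{\phi}^{*}H_{\phi}}(z)=\|H_{\phi}k_{z}\|^{2}$ asserted in (iii) is immediate from the definition of the Berezin transform and the self-adjointness of $H_{\phi}^{*}H_{\phi}$. The implication (i)$\Rightarrow$(ii) is standard: the composition of a compact operator with its bounded adjoint is compact. For (ii)$\Rightarrow$(iii), I would invoke the fact that on a bounded pseudoconvex domain the normalized Bergman kernels $k_{z}$ tend weakly to zero as $z\to b\D$, a consequence of $K(z,z)\to\infty$ together with the reproducing-property formula $\langle g,k_{z}\rangle=g(z)/\sqrt{K(z,z)}$ tested on the dense subspace $A^{2}(\D)\cap L^{\infty}(\D)$. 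Compact operators send weakly null sequences to norm-null ones, so $\|H_{\phi}^{*}H_{\phi}k_{z}\|\to 0$ and in particular $\widetilde{H_{\phi}^{*}H_{\phi}}(z)\to 0$. The equivalence (iii)$\Leftrightarrow$(iv) is exactly Theorem~\ref{ThmMain2}, so no additional work is required there.

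The essential direction is (iv)$\Rightarrow$(i). Given $\phi\in C(\Dc)$ that is holomorphic along every analytic disc contained in $b\D$, I would localize $H_{\phi}$ via a finite partition of unity $\{\chi_{j}\}$ subordinate to a suitable open cover of $\Dc$, writing $H_{\phi}=\sum_{j}H_{\chi_{j}\phi}$, and proving each summand is compact. Patches supported compactly inside $\D$ give a symbol $\chi_{j}\phi$ continuous on $\Dc$ and vanishing on $b\D$, so the multiplier $M_{\chi_{j}\phi}:A^{2}(\D)\to L^{2}(\D)$ is compact via the standard argument that weakly null sequences in $A^{2}(\D)$ converge to zero uniformly on compact subsets of $\D$; hence $H_{\chi_{j}\phi}$ is compact. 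Patches localized near strongly pseudoconvex boundary points are handled by the classical local compactness estimates for Hankel operators available at such points.

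The main obstacle is the remaining case: patches near weakly pseudoconvex boundary points, which on a convex Reinhardt domain in $\C^{2}$ necessarily lie on analytic discs embedded in $b\D$. Here hypothesis (iv) becomes indispensable, since it supplies holomorphy of $\phi$ along these discs. My plan is to exploit the $b\mathbb{D}\times b\mathbb{D}$ Reinhardt symmetry of $\D$ by Fourier-decomposing $\phi$ with respect to the torus action on $\Dc$, then use (iv) to discard the Fourier modes that could contribute a non-compact piece to $H_{\chi_{j}\phi}$, and finally invoke the quantitative estimates from the proof of Theorem~\ref{ThmMain2} to upgrade the pointwise vanishing of the Berezin transform into operator-norm control of $H_{\chi_{j}\phi}$ on arbitrary weakly null sequences in $A^{2}(\D)$. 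This last step, where the specific two-dimensional geometry of convex Reinhardt boundaries is essential, is the part I expect to require the most care.
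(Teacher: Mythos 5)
Your cycle (i)$\Rightarrow$(ii)$\Rightarrow$(iii)$\Leftrightarrow$(iv) and the arguments for those arrows match the paper: (i)$\Leftrightarrow$(ii) is general operator theory, (ii)$\Rightarrow$(iii) follows from the weak convergence $k_z\to 0$ as $z\to b\D$, and (iii)$\Leftrightarrow$(iv) is Theorem~\ref{ThmMain2}. The divergence is in how the loop is closed. The paper does not prove (iv)$\Rightarrow$(i) at all; it quotes the equivalence (i)$\Leftrightarrow$(iv) from \cite{ClS18}, the same reference already invoked for the converse directions of Theorem~\ref{ThmMain2} and Proposition~\ref{Prop3}. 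You instead attempt to prove (iv)$\Rightarrow$(i) from scratch, and that is where the proposal has a genuine gap.

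The problematic step is the last one: ``invoke the quantitative estimates from the proof of Theorem~\ref{ThmMain2} to upgrade the pointwise vanishing of the Berezin transform into operator-norm control of $H_{\chi_j\phi}$ on arbitrary weakly null sequences.'' No such estimates are available there: the forward direction of Theorem~\ref{ThmMain2} only extracts boundary holomorphy of the symbol from eigenvalue asymptotics, and its converse direction is itself a citation of \cite{ClS18}, so leaning on it here would be circular. More fundamentally, vanishing of the Berezin transform on $b\D$ does not in general imply compactness, even for positive operators --- the paper's introduction recalls the example from \cite{AZ98} of a positive diagonal non-compact operator whose Berezin transform vanishes on the boundary --- so any passage from Berezin decay to norm decay on weakly null sequences must use structural information about $H_{\chi_j\phi}$ that your sketch does not supply. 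The partition-of-unity reduction likewise leaves the key local pieces near the boundary discs unproved; the way this is actually handled in \cite{ClS18} is by subtracting from $\phi$ its (holomorphic) restriction to the disc, using that the disc is a holomorphic peak set, and combining this with compactness estimates away from the discs. Unless you intend to reproduce that argument in full, the clean fix is to do what the paper does: cite \cite{ClS18} for (i)$\Leftrightarrow$(iv) and let Theorem~\ref{ThmMain2} supply (iii)$\Rightarrow$(iv).
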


Our results show that for a subclass of positive operators, containing 
some of Hermitian squares of Hankel and Toeplitz operators, compactness 
can be characterized by the Berezin transform. However, this is not 
possible for all positive operators. For instance, the example in 
\cite{AZ98} is a positive diagonal non-compact operator whose 
Berezin transform vanishes on the boundary.

\section{Quasi-Homogeneous Functions}
 Let $\D\subseteq \C^n$ be a Reinhardt domain.  For each multi-index 
 $\alpha=(\alpha_1,\ldots,\alpha_n) \in \mathbb{Z}^n$ we define 
$e_{\alpha}(z)=z^{\alpha}/\|z^{\alpha}\|$ where  
$z^{\alpha}=z_1^{\alpha_1}\cdots z_n^{\alpha_n}$ and $\|\cdot\|$ 
denotes the $L^2$-norm on $\D$. Here we use the convention 
that $e_{\alpha}=0$ if $\|z^{\alpha}\|=\infty$.  Then there 
exists $\Gamma_{\D}\subseteq \mathbb{Z}^n$ such that 
$\{e_{\alpha}\}_{\alpha\in \Gamma_{\D}}$ is an orthonormal basis of 
$A^2(\D)$. We note that in case $\D$ is a bounded complete 
Reinhardt domain,  then $\Gamma_{\D}= \mathbb{N}_0^n$ 
where $\mathbb{N}_0=\{0,1,2,\ldots\}$. Then $K$, the Bergman 
kernel of $\D$, has the series representation 
\[ K(z,w) =\sum_{\alpha\in\Gamma_{\D}} e_{\alpha}(z)\overline{e_{\alpha}(w)}\]
converging uniformly on compact subsets of $\D\times \D$. 

We call $\phi\in L^2(\D)$ a \textit{multi-radial} function if 
$\phi(z)=\phi(|z_1|,\ldots,|z_n|)$  for $z\in \D$ and a  
\textit{quasi-homogeneous} function of multi-degree 
$J=(j_1,\ldots,j_n)\in \mathbb{Z}^n$, if $\phi(\xi\diamond z)=\xi^J\phi(z)$ 
for almost every $z\in \D$ and  $\xi\in\mathbb{T}^n$. 
If $\phi$ is quasi-homogeneous function of multi-degree $J$ 
then it  can be written as $\phi(z)=\varphi(z) e^{i J\cdot \theta}$ 
where $\varphi(z)=\phi(|z_1|,\ldots,|z_1|)$ is multi-radial function,  
$\theta_j$ is the argument of $z_j$, and 
$J\cdot \theta= j_1\theta_1+\cdots+j_n\theta_n$.

Let $\phi$ be a multi-radial function. Then $T_{\phi}$ is diagonal. 
Indeed,  we have
\begin{align*}
\langle T_{\phi}e_{\alpha},e_{\beta} \rangle 
&= \langle P(\phi e_{\alpha}),e_{\beta} \rangle 
=\langle \phi e_{\alpha},e_{\beta}\rangle  
= \int_{\D} \phi(z) e_{\alpha}(z) \overline{e_{\beta}(z)} dV(z) \\
&= \int_{\D} \phi(z) \frac{|z^{\alpha}|}{\|z^{\alpha}\|} 
e^{i\alpha \cdot \theta} \frac{|z^{\beta}|}{\|z^{\beta}\|} 
e^{-i\beta \cdot \theta} dV(z)\\
&= \begin{cases} 0 
	& \text{if}\;\; \alpha\neq \beta \\ 
	\displaystyle  \int_{\D} \phi(z) |e_{\alpha}(z)|^2 dV(z) 
	&\text{if}\;\; \alpha
	= \beta  
\end{cases}
\end{align*}
for all $\alpha, \beta\in \Gamma_{\D}$.

Next,  let $\alpha, \beta\in \Gamma_{\D}$ and 
$\phi(z)=\varphi(z)e^{iJ\cdot \theta}$ be a quasi-homogeneous 
function of multi-degree $J$ where $\varphi$ is a multi-radial 
function. Then 
\begin{align}\nonumber
\langle T_{\phi}e_{\alpha},e_{\beta} \rangle 
&= \langle P(\phi e_{\alpha}),e_{\beta} \rangle 
=\langle \phi e_{\alpha},e_{\beta}\rangle \\
\nonumber &= \int_{\D} \phi(z)e_{\alpha}(z) \overline{e_{\beta}(z)} dV(z) \\
\nonumber &= \int_{\D} \varphi(z) e^{iJ\cdot \theta} 
	e_{\alpha}(z) \overline{e_{\beta}(z)} dV(z) \\
\label{Q} &= \int_{\D} \varphi(z) e^{iJ\cdot \theta} 
\frac{|z|^{\alpha}}{\|z^{\alpha}\|} e^{i\alpha \cdot \theta}
\frac{|z|^{\beta}}{\|z^{\beta}\|} e^{-i\beta \cdot \theta} dV(z)\\
\nonumber &=  \begin{cases} 0 
	& \text{if}\;\; \alpha+J\neq \beta \\ \displaystyle \int_{\D} 
	\varphi(z) |e_{\alpha}(z)|| e_{\beta}(z)|dV(z)
	& \text{if}\;\; \alpha+J= \beta
\end{cases}.
\end{align} 
Let us denote 
$\displaystyle \lambda'_{\alpha} 
	= \int_{\D} \varphi(z) |e_{\alpha}(z)|| e_{\alpha+J}(z)|dV(z)$ 
for all $\alpha,\alpha+J\in \Gamma_{\D}$ and  
$\lambda'_{\alpha} =0$ otherwise. Then
\begin{align}\label{EqnEigT}
T_{\phi}e_{\alpha}= \lambda'_{\alpha} e_{\alpha+J}
\end{align} 
for all $\alpha\in \Gamma_{\D}$. 

To compute the eigenvalues of $H^{*}_{\phi}H_{\phi}$ we will 
use the representation
\[ H^{*}_{\phi}H_{\phi}=T_{|\phi|^{2}}-T^{*}_{\phi}T_{\phi}.\]
One can check that $T^{*}_{\phi}T_{\phi}$ is diagonal with respect to 
monomials. Hence, we have 
\begin{align*}
\langle T^{*}_{\phi} 
T_{\phi} e_{\alpha},e_{\beta} \rangle 
&= \langle T_{\phi} e_{\alpha}, T_{\phi} e_{\beta} \rangle
=\langle \lambda'_{\alpha} e_{\alpha+ J}, 
	\lambda'_{\beta} e_{\beta+J}\rangle \\
&= \begin{cases} 0 & \text{if}\;\; \alpha\neq \beta \\
	|\lambda'_{\alpha}|^2 & \text{if}\;\; \alpha= \beta 
	\text{ and } \alpha+J\in \Gamma_{\D}
\end{cases}
\end{align*}
for $\alpha, \beta \in \Gamma_{\D}$. 
We also know that $T_{|\phi|^{2}}$ is diagonal as well and 
\begin{align*}
\langle T_{|\phi|^{2}}e_{\alpha},e_{\beta} \rangle 
&= \int_{\D} |\varphi(z)|^2 \frac{|z^{\alpha}|}{\|z^{\alpha}\|_{A^{2}(\Omega)}} 
e^{i\alpha \cdot \theta} \frac{|z^{\beta}|}{\|z^{\beta}\|_{A^{2}(\Omega)}} 
e^{-i\beta \cdot \theta} dV(z)\\
&= \begin{cases} 0 & \text{ if } \alpha\neq \beta \\ 
	\displaystyle  \int_{\D} |\varphi(z)|^2 |e_{\alpha}(z)|^{2} dV(z) 
	&\text{ if }  \alpha= \beta  
\end{cases}
\end{align*}
for all $\alpha, \beta\in \Gamma_{\D}$. 

Let  
\begin{align*}
\lambda''_{\alpha}= \int_{\D} |\varphi(z)|^2 |e_{\alpha}(z)|^{2}dV(z)
\end{align*} 
for all $\alpha\in \Gamma_{\D}$. Hence we can write 
\begin{align}\nonumber 
\langle H^*_{\phi}H_{\phi}e_{\alpha},e_{\alpha}\rangle 
& =\langle T_{|\phi|^2}e_{\alpha}, e_{\alpha}\rangle 
-\langle T^*_{\phi}T_{\phi}e_{\alpha}, e_{\alpha}\rangle \\
\label{EqnEigH}& = \begin{cases} \lambda''_{\alpha} 
	& \text{ if }  \alpha+J\notin \Gamma_{\D} \\ 
	 \lambda''_{\alpha} - |\lambda'_{\alpha}|^2  
	&\text{ if } \alpha+J\in \Gamma_{\D}
\end{cases} 
\end{align}
for all $\alpha\in \Gamma_{\D}$.

Let  $H_J(\D)$ be the space of all square integrable  quasi-homogeneous 
functions of multi-degree $J$. Then, $H_J(\D)$ is a closed subspace of 
$L^2(\D)$ and $L^2(\D)=\bigoplus_{J\in\mathbb{Z}^n}H_J(\D)$. 
Let $Q_J$ be the orthogonal projection from $L^2(\D)$ to $H_J(\D)$.

The following lemma is implicitly contained in \cite{Le10H}. The proof 
can easily be adopted from the proof of \cite[Lemma 2.2]{Le10H} 
for Reinhardt domains.  
We note that $d\sigma$ below denotes the normalized surface measure 
on $\mathbb{T}^n$ so that $\sigma(\mathbb{T}^n ) = 1$. 

\begin{lemma} \label{Lem1} 
Let $\D$ be a Reinhardt domain  in $\C^n$ . Then for 
$J\in \mathbb{Z}^n,$ and $f\in L^2(\D)$, we have
\[ f_J(z) =(Q_Jf)(z)
=\int_{\mathbb{T}^n}f(z\diamond \xi) \overline{\xi^J}d\sigma(\xi)\]
for almost every $z\in \D$. Furthermore,  $H_J(\D)$ is 
orthogonal to  $H_K(\D)$ whenever $J\neq K$ and 
$L^2(\D)=\bigoplus_{J\in\mathbb{Z}^n}H_J(\D)$.
\end{lemma}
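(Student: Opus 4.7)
The plan is to exploit the polar decomposition $z = r \diamond \xi$, with $r_j = |z_j|$ and $\xi_j = z_j/|z_j|$, which, by Reinhardt symmetry of $\D$, identifies $\D$ (modulo a null set) with a product $R \times \mathbb{T}^n$ for some $R \subseteq (0,\infty)^n$, and writes $dV$ as a product of a measure on $R$ and the normalized Haar measure $d\sigma$ on $\mathbb{T}^n$. Under this identification, $H_J(\D)$ corresponds to $L^2$ functions whose fiberwise Fourier series on $\mathbb{T}^n$ consists of the single mode $\xi^J$, so the whole lemma reduces to fiberwise Fourier analysis on $\mathbb{T}^n$ together with Fubini and Parseval.

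To execute this, I would first define $(P_J f)(z) = \int_{\mathbb{T}^n} f(z \diamond \xi)\, \overline{\xi^J}\, d\sigma(\xi)$ for $f \in L^2(\D)$. Cauchy--Schwarz together with the rotation invariance of $dV$ gives $\|P_J f\|_{L^2(\D)} \le \|f\|_{L^2(\D)}$, so $P_J$ is bounded on $L^2(\D)$. A change of variable $\xi \mapsto \eta^{-1} \diamond \xi$, legitimate by translation invariance of $d\sigma$, yields $(P_J f)(\eta \diamond z) = \eta^J (P_J f)(z)$, hence $P_J f \in H_J(\D)$. Next, for $g \in H_J(\D)$ and $h \in H_K(\D)$ with $J \neq K$, Reinhardt invariance of $dV$ gives $\langle g, h \rangle = \eta^{J-K}\langle g, h \rangle$ for every $\eta \in \mathbb{T}^n$; averaging in $\eta$ produces the factor
\[ \int_{\mathbb{T}^n} \eta^{J-K}\, d\sigma(\eta) = 0, \]
so $H_J(\D) \perp H_K(\D)$.

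The identification $P_J = Q_J$ then follows from two observations: for $h \in H_J(\D)$, quasi-homogeneity together with $\int_{\mathbb{T}^n} d\sigma = 1$ gives $P_J h = h$; and for $h \in H_K(\D)$ with $K \neq J$, the same torus-integral computation gives $P_J h = 0$. Thus $P_J$ is the orthogonal projection onto $H_J(\D)$. For the direct sum decomposition it suffices to establish the Parseval-type identity $\sum_{J \in \mathbb{Z}^n} \|P_J f\|^2 = \|f\|^2$ for every $f \in L^2(\D)$, which by Fubini and the product form of $dV$ reduces to the classical Parseval identity applied to $\xi \mapsto f(r \diamond \xi) \in L^2(\mathbb{T}^n)$ for almost every $r$ and then integrated in $r$. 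The main technical obstacle is the careful Fubini justification, in particular verifying that $\xi \mapsto f(r \diamond \xi)$ is a well-defined element of $L^2(\mathbb{T}^n)$ for almost every $r$; this is routine but relies crucially on the explicit product form of $dV$ in polar coordinates and the Reinhardt symmetry of $\D$.
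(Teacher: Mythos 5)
Your proof is correct and follows essentially the same route the paper intends: the paper does not write out a proof but defers to \cite[Lemma 2.2]{Le10H}, whose argument is exactly this fiberwise Fourier analysis on $\mathbb{T}^n$ in polar coordinates, combined with rotation invariance of $dV$, Fubini, and Parseval. Nothing further is needed.
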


\begin{lemma}\label{L1} 
Let  $\D$ be a Reinhardt domain in $\C^n, J\in \mathbb{Z}^n, 
S\in \Gamma_{\D},$ and $\phi,\psi\in L^{\infty}(\D)$. Then
\begin{align*}
H^*_{\phi_J}H_{\psi_J}e_S(z) 
=\int_{\eta \in \mathbb{T}^n} (H^*_{\phi}H_{\psi_J}e_S)(\eta \diamond z) 
\overline{\eta}^Sd\sigma(\eta)
\end{align*}
for every $z\in \D$.
\end{lemma}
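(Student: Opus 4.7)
The plan is to reduce the identity to a statement about Bergman projections, and then exploit the torus equivariance of the Bergman kernel of a Reinhardt domain together with the quasi-homogeneity of $H_{\psi_J}e_S$.

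First, I would record that $H_{\psi_J}e_S=(I-P)(\psi_J e_S)$ lies in $(A^{2}(\D))^{\perp}$, so a routine computation with adjoints yields $H_\chi^{*}H_{\psi_J}e_S = P(\bar\chi\, H_{\psi_J}e_S)$ for every $\chi\in L^{\infty}(\D)$. Applying this with $\chi=\phi_J$ on the left and $\chi=\phi$ on the right, the identity to be proved reduces to
\[
P(\overline{\phi_J}\,H_{\psi_J}e_S)(z) \;=\; \int_{\mathbb{T}^n} P(\bar\phi\,H_{\psi_J}e_S)(\eta\diamond z)\,\overline{\eta}^{\,S}\,d\sigma(\eta).
\]
Using the reproducing formula for $P$ and substituting the expression $\overline{\phi_J(w)}=\int_{\mathbb{T}^n}\overline{\phi(w\diamond\eta)}\,\eta^{J}\,d\sigma(\eta)$ obtained by conjugating Lemma~2.1 into the left-hand side, I would apply Fubini (justified since $\phi\in L^{\infty}$) to pull the torus integral outside.

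The next step is a change of variables $w\mapsto\overline{\eta}\diamond w$ in the inner $\D$-integral. Three things happen simultaneously: the Lebesgue measure is preserved because $\eta\in\mathbb{T}^n$; the Bergman kernel transforms according to $K(z,\overline{\eta}\diamond w)=K(\eta\diamond z,w)$, which one reads directly off the series $K(z,w)=\sum_{\alpha\in\Gamma_{\D}} e_\alpha(z)\overline{e_\alpha(w)}$ recorded in Section~2; and $H_{\psi_J}e_S$ acquires a factor $\overline{\eta}^{\,J+S}$. This last point is the key structural observation: $\psi_J e_S$ is quasi-homogeneous of multi-degree $J+S$, and because $P$ sends $H_{J+S}(\D)$ into itself (the projection $P(\psi_J e_S)$ is a scalar multiple of $e_{J+S}$, or zero, as in the calculations preceding \eqref{Q}), the image $H_{\psi_J}e_S$ is quasi-homogeneous of the same multi-degree.

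Combining these three effects, the coefficient in front of the inner integral becomes $\eta^{J}\overline{\eta}^{\,J+S}=\overline{\eta}^{\,S}$, and the inner integral itself becomes $P(\bar\phi\,H_{\psi_J}e_S)(\eta\diamond z)=(H_\phi^{*}H_{\psi_J}e_S)(\eta\diamond z)$, which is exactly the right-hand side of the lemma. I do not foresee any serious obstacle: the proof is essentially a bookkeeping of torus characters, with the quasi-homogeneity of $H_{\psi_J}e_S$ and the torus symmetry of $K$ doing all the work; the only routine items to verify are the applicability of Fubini and the identity $K(z,\overline{\eta}\diamond w)=K(\eta\diamond z,w)$.
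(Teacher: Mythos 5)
Your proposal is correct and follows essentially the same route as the paper's proof: both rest on the identity $H^{*}_{\chi}H_{\psi_J}e_S=P(\overline{\chi}\,H_{\psi_J}e_S)$, the quasi-homogeneity of $H_{\psi_J}e_S$ of multi-degree $J+S$, the torus invariance of the Bergman kernel, and a Fubini-plus-rotation change of variables on the inner integral. The only cosmetic difference is that you perform the substitution $w\mapsto\overline{\eta}\diamond w$ directly, whereas the paper first rewrites the integrand at $\eta\diamond w$ and then changes variables; these are the same computation.
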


\begin{proof} 
We note that $H^*_{\phi}H_{\psi}=PM_{\overline{\phi}}H_{\psi}$ 
(see \cite[Lemma 1]{CS14}). One can verify the following equalities
\begin{align*}
H_{\psi_J}e_S(w)=&H_{\psi_J}e_S(\eta \diamond w) \overline{\eta}^{J+S}\\
K(z,w)=&K(\eta \diamond z,\eta \diamond w)
\end{align*}
for any  $J\in \mathbb{Z}^n, S\in \Gamma_{\D}, z,w\in \D$ 
and $\eta\in \mathbb{T}^n$. Then 
\begin{align*}
H^*_{\phi_J}H_{\psi_J}e_S(z)
&= PM_{\overline{\phi_J}}H_{\psi_J}e_S(z) \\
&=\int_{w\in \D} K(z,w)\overline{\phi_J(w)} (H_{\psi_J}e_S)(w)dV(w)\\
&= \int_{\eta\in \mathbb{T}^n} \int_{w\in \D} K(\eta \diamond z,\eta \diamond w)
	\overline{\phi (\eta \diamond w)} (H_{\psi_J}e_S)(\eta \diamond w) \eta^J 
	\overline{\eta}^{J+S}dV(w)d\sigma(\eta) \\
&= \int_{\eta\in \mathbb{T}^n} \int_{w\in \D} K(\eta \diamond z, w)
	\overline{\phi( w)} (H_{\psi_J}e_S)(w) \overline{\eta}^SdV(w)d\sigma(\eta)\\ 
&=\int_{\eta\in \mathbb{T}^n} PM_{\overline{\phi}}(H_{\psi_J}e_S)(\eta \diamond z) 
	\overline{\eta}^Sd\sigma(\eta) \\
&=\int_{\eta\in \mathbb{T}^n} (H^*_{\phi}H_{\psi_J}e_S)(\eta \diamond z)
	\overline{\eta}^Sd\sigma(\eta).
\end{align*}
Therefore, the proof of the lemma is complete.
\end{proof}

\begin{lemma}\label{Lem2Quasihomo}
Let  $\D$ be a Reinhardt domain in $\C^n$ such that $K(z,z)\neq 0$ 
for $z\in \D$, $J\in \mathbb{Z}^n$, and    
$\phi\in L^{\infty}(\D)$.  Then 
$\widetilde{H^*_{\phi}H_{\phi}}(z)\to 0$ as $z\to b\D$ implies 
that $\widetilde{H^*_{\phi_J}H_{\phi_J}}(z)\to 0$ as $z\to b\D$ 
where $\phi_J$ is the quasi-homogeneous part of $\phi$ 
with multi-degree $J$.
\end{lemma}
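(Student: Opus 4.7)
The plan is to exploit the $\mathbb{T}^n$-symmetry of a Reinhardt domain to express $H_{\phi_J}k_z$ as a $\mathbb{T}^n$-average of Hankel operators applied to rotated normalized kernels. For $\xi\in\mathbb{T}^n$ I would define the operator $V_\xi:L^2(\D)\to L^2(\D)$ by $V_\xi f(z)=f(\xi\diamond z)$; since $|\xi_j|=1$, the real Jacobian is one, so $V_\xi$ is unitary, preserves $A^2(\D)$, and therefore commutes with the Bergman projection $P$. A direct computation gives $V_\xi M_\phi V_{\xi^{-1}}=M_{\phi(\xi\diamond\cdot)}$, which yields $V_\xi H_\phi V_{\xi^{-1}}=H_{\phi(\xi\diamond\cdot)}$.

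Using Lemma~\ref{Lem1} in the form $\phi_J(z)=\int_{\mathbb{T}^n}\phi(\xi\diamond z)\overline{\xi}^J d\sigma(\xi)$, the next step is to verify the kernel identity $V_{\xi^{-1}}k_z=k_{\xi\diamond z}$; this follows from the series expansion of $K$ in the basis $\{e_\alpha\}$ together with $K(\eta\diamond u,\eta\diamond v)=K(u,v)$ and $K(\xi\diamond z,\xi\diamond z)=K(z,z)$. Putting these pieces together and integrating against $\overline{\xi}^J$, I would arrive at
\[
H_{\phi_J}k_z=\int_{\mathbb{T}^n} V_\xi\bigl(H_\phi k_{\xi\diamond z}\bigr)\overline{\xi}^J d\sigma(\xi).
\]
Applying Minkowski's inequality followed by Jensen's inequality (using that $\sigma$ is a probability measure on $\mathbb{T}^n$ and $V_\xi$ is an isometry with $|\xi^J|=1$) then gives the key estimate
\[
\|H_{\phi_J}k_z\|^2 \leq \int_{\mathbb{T}^n}\|H_\phi k_{\xi\diamond z}\|^2 d\sigma(\xi)=\int_{\mathbb{T}^n}\widetilde{H^*_\phi H_\phi}(\xi\diamond z)\,d\sigma(\xi).
\]

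To conclude, I would need to pass the hypothesis $\widetilde{H^*_\phi H_\phi}(z)\to 0$ as $z\to b\D$ under the integral sign uniformly in $\xi$. Given $\epsilon>0$, choose a compact set $K\subset\D$ with $\widetilde{H^*_\phi H_\phi}<\epsilon$ on $\D\setminus K$ and set $K':=\{\xi\diamond w:\xi\in\mathbb{T}^n,\ w\in K\}$. Since $\D$ is Reinhardt, $K'$ is a compact, $\mathbb{T}^n$-invariant subset of $\D$ (the continuous image of the compact set $\mathbb{T}^n\times K$). For $z\in\D\setminus K'$ every rotate $\xi\diamond z$ lies outside $K'$, hence outside $K$, so the integrand is bounded by $\epsilon$ uniformly in $\xi$ and therefore $\|H_{\phi_J}k_z\|^2<\epsilon$. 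The main subtlety I anticipate is exactly this uniformity step: the hypothesis provides only pointwise decay of the Berezin transform near $b\D$, and the construction of the $\mathbb{T}^n$-invariant exhausting compact $K'$ is what upgrades it to the uniform bound needed to pass to $0$ inside the $\mathbb{T}^n$-average.
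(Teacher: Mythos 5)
Your argument is correct, and it rests on the same mechanism as the paper's proof: both reduce the lemma to the key estimate $\|H_{\phi_J}k_z\|^2\le\int_{\mathbb{T}^n}\|H_{\phi}k_{\xi\diamond z}\|^2\,d\sigma(\xi)$ and then use the $\mathbb{T}^n$-invariance of $\D$ to make the decay of the integrand uniform in $\xi$. The routes to that estimate differ, though. The paper works with the monomial basis: it uses Lemma~\ref{L1} to express the action of the diagonal operator $H^*_{\phi_J}H_{\phi_J}$ on each $e_S$ as a torus average, and then sums over all $J$ to obtain the exact Plancherel-type identity $\sum_{J\in\mathbb{Z}^n}\|H_{\phi_J}k_z\|^2=\int_{\mathbb{T}^n}\|H_{\phi}k_{\eta\diamond z}\|^2\,d\sigma(\eta)$, from which the inequality follows by discarding all but one nonnegative term. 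You instead conjugate by the rotation unitaries $V_\xi$, use $V_{\xi^{-1}}k_z=k_{\xi\diamond z}$ and $V_\xi H_\phi V_{\xi^{-1}}=H_{\phi(\xi\diamond\cdot)}$, and apply Minkowski's integral inequality followed by Cauchy--Schwarz; this yields only the inequality rather than the identity, but that is all the lemma needs, and it bypasses the basis computation and Lemma~\ref{L1} entirely. Your closing uniformity step is in fact spelled out more carefully than in the paper (which passes to the limit under the integral sign without comment); for a bounded Reinhardt domain the same uniformity also follows directly from $d(\xi\diamond z,b\D)=d(z,b\D)$, since $z\mapsto\xi\diamond z$ is a unitary of $\C^n$ preserving $\D$. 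The only points worth making explicit are the routine justification that $H_{\phi_J}f=\int_{\mathbb{T}^n}\overline{\xi}^J H_{\phi(\xi\diamond\cdot)}f\,d\sigma(\xi)$ as a Bochner integral for each $f\in A^2(\D)$ (so that $I-P$ may be pulled through the average), and that $V_\xi$ preserves $A^2(\D)$ because it sends each monomial to a unimodular multiple of itself, hence commutes with $P$.
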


\begin{proof} 
Since
\[ \langle H^*_{\phi}H_{\phi} K(\cdot, z),e_S\rangle
=\overline{\langle H^*_{\phi}H_{\phi} e_S, K(\cdot, z)\rangle}
=\overline{ H^*_{\phi}H_{\phi} e_S(z)},\]
we can write
\[ \|H_{\phi}k_z\|^2
= \frac{\sum_{S\in \Gamma_{\D}} 
\langle H^*_{\phi}H_{\phi} K(\cdot,z), e_S \rangle e_S(z)}{K(z,z)}
=\frac{\sum_{S\in \Gamma_{\D}} 
\overline{ H^*_{\phi}H_{\phi} e_S(z)}e_S(z) }{K(z,z)}.\]
Since $H^*_{\phi_J}H_{\phi_J}$ is diagonal for every 
$J\in \mathbb{Z}^n$, there exist nonnegative $\lambda_{J,S}$ 
satisfying 
\[H^*_{\phi_J}H_{\phi_J}e_S=\lambda_{J,S}e_S\]
for every $J\in \mathbb{Z}^n$ and $S\in \Gamma_{\D}$. 
By using Lemma \ref{L1}, we get
\[\lambda_{J,S}e_S(z)=H^*_{\phi_J}H_{\phi_J}e_S(z)
=\int_{\eta\in \mathbb{T}^n} H^*_{\phi} H_{\phi_J} 
e_S(\eta \diamond z)\overline{\eta}^Sd\sigma(\eta).\]
We note that $\lambda_{J,S}=\|H_{\phi_J}e_S\|^2$. Using the 
fact that $K(z,z)=K(\eta \diamond z,\eta \diamond z)$ on Reinhardt 
domains we have
\begin{align*}
\sum_{J\in \mathbb{Z}^n} \|H_{\phi_J}k_z\|^2 
&= \sum_{J\in \mathbb{Z}^n} \sum_{S\in \Gamma_{\D}} 
	\frac{ \overline{ H^*_{\phi_J}H_{\phi_J}  e_S(z)}e_S(z) }{K(z,z)} \\
&= \sum_{J\in \mathbb{Z}^n}  \sum_{S\in \Gamma_{\D}} 
\int_{\eta\in \mathbb{T}^n} \frac{ \overline{H^*_{\phi}H_{\phi_J} 
		e_S(\eta \diamond z)}\eta^S e_S(z)}{K(\eta \diamond z,\eta \diamond z)} d\sigma(\eta) \\
&= \sum_{S\in \Gamma_{\D}}  \int_{\eta\in \mathbb{T}^n} 
	\frac{ \overline{H^*_{\phi}H_{\phi} e_S(\eta \diamond z)} 
	e_S(\eta \diamond z)}{K(\eta \diamond z, \eta \diamond z)} d\sigma(\eta) \\
&=\int_{\eta\in \mathbb{T}^n} \sum_{S\in \Gamma_{\D}}  
	\frac{ \overline{H^*_{\phi}H_{\phi} e_S(\eta \diamond z)} 
	e_S(\eta \diamond z)}{K(\eta \diamond z, \eta \diamond z)}  d\sigma(\eta)\\
&= \int_{\eta\in \mathbb{T}^n} \|H_{\phi}k_{\eta \diamond z}\|^2d\sigma(\eta).
\end{align*}
That is, we have  
\[ \|H_{\phi_J}k_z\|^2 
\leq \int_{\eta\in \mathbb{T}^n} \|H_{\phi}k_{\eta \diamond z}\|^2d\sigma(\eta) \]
for any $J\in \mathbb{Z}^n.$ 
Since the domain is Reinhardt, $\|H_{\phi}k_{\eta \diamond z}\| \to 0$ 
as $z\to b\D$ for all $\eta\in \mathbb{T}^n$. In that case,
\[ \int_{\eta\in \mathbb{T}^n} \|H_{\phi}k_{\eta \diamond z}\|^2d\sigma(\eta) 
\to 0 \text{ as }  z\to b\D.\]
That is, $\|H_{\phi_J}k_z\|=\widetilde{H^*_{\phi_J}H_{\phi_J}}(z)\to 0$ 
as $z\to b\D$ for any $J\in \mathbb{Z}^n$. 
\end{proof}

\begin{lemma}\label{LemFourierBerezin}
Let  $\D$ be a Reinhardt domain in $\C^n$ such that 
$K(z,z)\neq 0$ for $z\in \D$, $J\in \mathbb{Z}^n,$ and $\phi\in L^{\infty}(\D)$. 
Then $\widetilde{(\phi_J)}=(\widetilde{\phi})_J$.	
\end{lemma}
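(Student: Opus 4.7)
The plan is to unfold the definition of the Berezin transform, substitute the explicit Fourier-type formula for $Q_J$ provided by Lemma 2.1, and then exploit the Reinhardt invariance of the Bergman kernel to push $\xi$ from the $w$-variable over to the $z$-variable. First I would write
\[
\widetilde{(\phi_J)}(z)=\int_{\D}\phi_J(w)|k_z(w)|^2\,dV(w)
=\int_{\D}\int_{\mathbb{T}^n}\phi(\xi\diamond w)\overline{\xi^J}\,d\sigma(\xi)\,|k_z(w)|^2\,dV(w),
\]
using Lemma 2.1 in the second equality, and then swap the order of integration via Fubini, which is justified since $\phi\in L^\infty(\D)$ and $|k_z|^2\in L^1(\D)$.

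Next, for each fixed $\xi\in\mathbb{T}^n$, I would perform the change of variables $w\mapsto\overline{\xi}\diamond w$ in the inner integral. Lebesgue measure is preserved because $|\xi_j|=1$ for every $j$, and $\phi(\xi\diamond w)$ becomes $\phi(w)$. The kernel factor transforms via the Reinhardt invariance $K(\eta\diamond a,\eta\diamond b)=K(a,b)$ for $\eta\in\mathbb{T}^n$; applying this to both the numerator and the normalization gives
\[
|k_z(\overline{\xi}\diamond w)|^2
=\frac{|K(\overline{\xi}\diamond w,z)|^2}{K(z,z)}
=\frac{|K(w,\xi\diamond z)|^2}{K(\xi\diamond z,\xi\diamond z)}
=|k_{\xi\diamond z}(w)|^2.
\]
Hence the inner integral collapses to $\widetilde{\phi}(\xi\diamond z)$, the Berezin transform of $\phi$ evaluated at $\xi\diamond z$.

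Reassembling, this yields
\[
\widetilde{(\phi_J)}(z)=\int_{\mathbb{T}^n}\widetilde{\phi}(\xi\diamond z)\overline{\xi^J}\,d\sigma(\xi),
\]
and the right-hand side equals $(Q_J\widetilde{\phi})(z)=(\widetilde{\phi})_J(z)$ by Lemma 2.1 applied to the bounded (hence $L^2_{\mathrm{loc}}$) function $\widetilde{\phi}$. The only substantive point is correctly tracking the Reinhardt-invariance identity for $|k_z|^2$ with the proper normalization of the kernel, and the rest is bookkeeping, so I do not anticipate a genuine obstacle.
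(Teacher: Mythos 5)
Your proposal is correct and follows essentially the same route as the paper: substitute the integral formula for $Q_J$ from Lemma \ref{Lem1}, apply Fubini, rotate the variable of integration, and use the invariance $K(\eta\diamond a,\eta\diamond b)=K(a,b)$ to convert the inner integral into $\widetilde{\phi}(\xi\diamond z)$. The only (immaterial) difference is the order of the steps — the paper inserts the kernel invariance before swapping the integrals, whereas you swap first and then change variables.
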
 
\begin{proof}
Let $z\in \D$. Then 
\begin{align*}
\widetilde{(\phi_J)}(z)
=&\int_{\D}\phi_J(w)\frac{|K(w,z)|^2}{K(z,z)}dV(w)\\
=&\int_{w\in \D}\int_{\eta\in \mathbb{T}^n} \phi(\eta\diamond w)
	\overline{\eta}^J\frac{|K(w,z)|^2}{K(z,z)}d\sigma(\eta)dV(w)\\
=&\int_{w\in \D}\int_{\eta\in \mathbb{T}^n} \phi(\eta\diamond w)
	\overline{\eta}^J\frac{|K(\eta\diamond w,\eta\diamond z)|^2}{K(\eta\diamond z,
	\eta\diamond z)}d\sigma(\eta)dV(w)\\
=&\int_{\eta\in \mathbb{T}^n} \int_{w\in \D} \phi(\eta\diamond w)
\overline{\eta}^J\frac{|K(\eta\diamond w,\eta\diamond z)|^2}{K(\eta\diamond z,
	\eta\diamond z)}dV(w)d\sigma(\eta)\\
=&\int_{\eta\in \mathbb{T}^n} \int_{\xi\in \D} \phi(\xi)
	\overline{\eta}^J\frac{|K(\xi,\eta\diamond z)|^2}{K(\eta\diamond z,
	\eta\diamond z)}dV(\xi)d\sigma(\eta)\\
=&\int_{\eta\in \mathbb{T}^n} \widetilde{\phi}(\eta\diamond z)
	\overline{\eta}^Jd\sigma(\eta)\\
=&(\widetilde{\phi})_J(z).
\end{align*} 
Therefore,  $\widetilde{(\phi_J)}=(\widetilde{\phi})_J$.	
\end{proof} 

Let  $\D$ be a bounded Reinhardt domain in $\C^n$ and  
$\phi\in L^2(\D)$. We define  the Ces\`aro mean $\Lambda_k(\phi )$  as 
\[\Lambda_k(\phi)=\sum_{|j_1 |\leq k,\ldots,|j_n |\leq k}
\left(1-\frac{|j_1|}{k+1}\right)\cdots\left(1-\frac{|j_n|}{k+1}\right) \phi_J\]
for $k\in \mathbb{N}$ (see \cite[Sec. 2.5]{KBook}). 

Next we state a well known result in harmonic analysis and we provide 
a proof here for the convenience of the reader. 

\begin{lemma}\label{LemSum}
Let  $\D$ be a bounded Reinhardt domain in $\C^n$ and  $\phi\in C(\Dc)$. 
Then $\Lambda_k(\phi)\to\phi$ uniformly on $\Dc$ as $k\to\infty$.
\end{lemma}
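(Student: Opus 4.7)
The plan is to recognize $\Lambda_k(\phi)(z)$ as the convolution, in the torus variable, of the continuous function $\xi\mapsto\phi(\xi\diamond z)$ with the $n$-dimensional Fej\'er kernel on $\mathbb{T}^n$, and then to run the standard approximate-identity argument uniformly in $z$. Fixing $z\in\Dc$, the Reinhardt property gives $\xi\diamond z\in\Dc$ for every $\xi\in\mathbb{T}^n$, so that $g_z(\xi):=\phi(\xi\diamond z)$ defines a continuous function on $\mathbb{T}^n$ whose $J$-th Fourier coefficient equals $\phi_J(z)$ by Lemma \ref{Lem1}. Substituting this into the definition of $\Lambda_k$ and interchanging the finite sum with the integral yields
\[
\Lambda_k(\phi)(z)=\int_{\mathbb{T}^n}\phi(\xi\diamond z)\,F_k(\xi)\,d\sigma(\xi),
\]
where $F_k(\xi)=\prod_{\ell=1}^{n}K_k(\xi_\ell)$ is the product Fej\'er kernel, with $K_k(e^{i\theta})=\sum_{|j|\le k}\bigl(1-|j|/(k+1)\bigr)e^{ij\theta}$. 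Standard facts about the one-dimensional Fej\'er kernel then transfer to $F_k$: it is nonnegative, $\int_{\mathbb{T}^n}F_k\,d\sigma=1$, and $\int_{\mathbb{T}^n\setminus U}F_k\,d\sigma\to 0$ for every open neighborhood $U$ of the identity, so $\{F_k\}$ is an approximate identity on $\mathbb{T}^n$.

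Next, using $\int F_k\,d\sigma=1$ I would rewrite
\[
\Lambda_k(\phi)(z)-\phi(z)=\int_{\mathbb{T}^n}F_k(\xi)\bigl[\phi(\xi\diamond z)-\phi(z)\bigr]\,d\sigma(\xi)
\]
and split the integral into the portions over $U$ and $\mathbb{T}^n\setminus U$. Continuity of $\phi$ on the compact set $\Dc$ gives uniform continuity, and the elementary bound $|\xi\diamond z-z|\le|\xi-\mathbf{1}|\cdot\max_\ell|z_\ell|$ (which is controlled uniformly by the boundedness of $\D$) shows that $|\phi(\xi\diamond z)-\phi(z)|<\varepsilon$ for all $z\in\Dc$ once $U$ is chosen small enough. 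The integral over $U$ is then bounded by $\varepsilon$, while the integral over $\mathbb{T}^n\setminus U$ is dominated by $2\|\phi\|_{C(\Dc)}\int_{\mathbb{T}^n\setminus U}F_k\,d\sigma$, which tends to zero with $k$ independently of $z$.

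The argument is essentially Fej\'er's theorem on $\mathbb{T}^n$ run with a parameter, and no deep input is needed. The only point that really requires attention is the uniformity of the approximation in $z\in\Dc$; this is precisely what the uniform continuity of $\phi$ together with the Reinhardt invariance $\xi\diamond\Dc\subseteq\Dc$ supply, so beyond identifying $\Lambda_k(\phi)$ with a Fej\'er convolution I do not anticipate a substantive obstacle.
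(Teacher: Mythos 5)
Your proposal is correct and follows essentially the same route as the paper: both identify $\Lambda_k(\phi)(z)$ as convolution of $\xi\mapsto\phi(\xi\diamond z)$ with the product Fej\'er kernel, then run the standard approximate-identity argument, splitting the integral near and away from the identity and using uniform continuity of $\phi$ on the compact set $\Dc$ to get uniformity in $z$. No substantive difference.
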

\begin{proof}
Let us denote  $\mathbb{T}^n_{\delta} 
= \{\eta\in \mathbb{T}^n:|\eta_j|<\delta \text{ for } 1\leq j\leq n\}$ 
for $\delta>0$ and $F_k$ denote the $k$th Fej\'er’s kernel.  
In the proof we will use some facts from sections 2.2 and 2.5 in 
\cite{KBook}. First, $F_k$ is a positive summability kernel. 
Then for any $\delta>0$ we have 
\begin{align}\label{EqFej}
\lim_{k\to\infty}\int _{\mathbb{T}^n\setminus \mathbb{T}^n_{\delta}} 
F_k(\eta_1)\cdots F_k(\eta_n)d\sigma(\eta)=0
\end{align} 
Furthermore,  
\begin{align*}
\Lambda_k(\phi)(z)
=\int_{\mathbb{T}^n} F_k(\eta_1)\cdots F_k(\eta_n)
\phi(\eta \diamond z)d\sigma(\eta)
\end{align*} 
for $k \geq 1$. 
Since $\phi$ is uniformly continuous on $\Dc$, for $\ep>0$ 
there exists $\delta>0$ such that $|\phi(\eta \diamond z)-\phi(z)|<\ep/2$ 
for $\eta\in \mathbb{T}^n_{\delta}$ and $z\in \Dc$.  Then 
\begin{align*}
|\Lambda_k(\phi)(z)-\phi(z)|
=&\, \int _{\mathbb{T}^n} F_k(\eta_1)\cdots F_k(\eta_n)
	(\phi(\eta \diamond z)-\phi(z))d\sigma(\eta) \\
\leq &\,  \int _{\mathbb{T}^n_{\delta}} F_k(\eta_1)\cdots F_k(\eta_n)
	|\phi(\eta \diamond z)-\phi(z)|d\sigma(\eta) \\
&+  \int _{\mathbb{T}^n\setminus \mathbb{T}^n_{\delta}} 
	F_k(\eta_1)\cdots F_k(\eta_n)
	|\phi(\eta \diamond z)-\phi(z)|d\sigma(\eta)\\
\leq &\, \frac{\ep}{2}+2\|\phi\|_{L^{\infty}}
	\int _{\mathbb{T}^n\setminus \mathbb{T}^n_{\delta}} 
	F_k(\eta_1)\cdots F_k(\eta_n)d\sigma(\eta).
\end{align*} 	
Then by \eqref{EqFej} there exists $N\in\mathbb{N}$ such that 
\[ |\Lambda_k(\phi)(z)-\phi(z)|\leq \ep\]
for $z\in \Dc$ and $k\geq N$. Therefore, $\Lambda_k(\phi)\to \phi$ 
uniformly on $\Dc$ as $k\to\infty$.
\end{proof} 

\begin{cor}\label{CorZero} 
Let  $\D$ be a bounded Reinhardt domain in $\C^n$ and 
$\phi\in L^{\infty}(\D)$ such that $\widetilde{\phi}\in C(\Dc)$. 
Then $\widetilde{\phi}=0$ on $b\D$ if and only if  
$\widetilde{(\phi_J)}=0$ on $b\D$ for all $J\in \mathbb{Z}^n$.	 
\end{cor}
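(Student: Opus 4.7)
The plan is to combine Lemma \ref{LemFourierBerezin}, which identifies $\widetilde{(\phi_J)}$ with the $J$-th quasi-homogeneous component $(\widetilde{\phi})_J$ of $\widetilde{\phi}$, together with Lemma \ref{LemSum} applied to $\widetilde{\phi}$, which is continuous on $\Dc$ by assumption. So the problem reduces to: for a function $g\in C(\Dc)$ on a bounded Reinhardt domain, $g=0$ on $b\D$ iff $g_J=0$ on $b\D$ for every $J\in \mathbb{Z}^n$.

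For the forward direction, I would start from the integral representation
\[ (\widetilde{\phi})_J(z)=\int_{\mathbb{T}^n} \widetilde{\phi}(\eta\diamond z)\, \overline{\eta}^J\, d\sigma(\eta),\]
which extends by continuity to $z\in\Dc$ since $\widetilde{\phi}\in C(\Dc)$. When $z\in b\D$, the Reinhardt property gives $\eta\diamond z\in b\D$ for every $\eta\in \mathbb{T}^n$, so the integrand vanishes identically and $(\widetilde{\phi})_J(z)=0$. Combined with Lemma \ref{LemFourierBerezin}, this yields $\widetilde{(\phi_J)}=0$ on $b\D$.

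For the backward direction, which is the substantive part, I would invoke the Ces\`aro mean machinery. Since each $(\widetilde{\phi})_J=\widetilde{(\phi_J)}$ vanishes on $b\D$ by hypothesis, the finite sum
\[\Lambda_k(\widetilde{\phi})=\sum_{|j_1|\le k,\ldots,|j_n|\le k}\Big(1-\tfrac{|j_1|}{k+1}\Big)\cdots\Big(1-\tfrac{|j_n|}{k+1}\Big)(\widetilde{\phi})_J\]
also vanishes on $b\D$ for every $k$. Applying Lemma \ref{LemSum} to $\widetilde{\phi}\in C(\Dc)$ gives $\Lambda_k(\widetilde{\phi})\to \widetilde{\phi}$ uniformly on $\Dc$, so passing to the limit on $b\D$ gives $\widetilde{\phi}=0$ on $b\D$.

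No real obstacle is expected; the work has been done in Lemmas \ref{LemFourierBerezin} and \ref{LemSum}. The only point that requires a moment of care is justifying, in the forward direction, that the integral formula for $(\widetilde{\phi})_J$ continues to hold pointwise on the boundary, which follows immediately from continuity of $\widetilde{\phi}$ on $\Dc$ and dominated convergence (or direct continuity of the integral).
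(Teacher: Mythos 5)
Your proof is correct and follows essentially the same route as the paper: both directions rest on the identity $\widetilde{(\phi_J)}=(\widetilde{\phi})_J$ from Lemma \ref{LemFourierBerezin}, with the converse handled by the Ces\`aro means and Lemma \ref{LemSum}. The only difference is that you spell out the torus-averaging justification for the forward direction, which the paper leaves implicit.
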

\begin{proof}
First we assume that $\widetilde{\phi}=0$ on $b\D$. Then 
Lemma \ref{LemFourierBerezin} implies that 
$\widetilde{(\phi_J)}=(\widetilde{\phi})_J=0$ on $b\D$ 
for all $J\in \mathbb{Z}^n$.	 

For the converse direction, let $\widetilde{(\phi_J)}=0$ on $b\D$ for all 
$J\in \mathbb{Z}^n$. Then again by Lemma \ref{LemFourierBerezin} 
we have  $(\widetilde{\phi})_J=\widetilde{(\phi_J)}=0$ on $b\D$. 
Hence  $\Lambda_k(\widetilde{\phi})=0$ on $b\D$ for all $k$. 
Lemma \ref{LemSum} implies that  $\Lambda_k(\widetilde{\phi})\to \phi$ 
uniformly on $\Dc$. Therefore, $\widetilde{\phi}=0$ on $b\D$.
\end{proof} 		

\section{Proof of Theorem \ref{ThmMain1} 
	and Corollary \ref{CorToeplitz} }
We will need the following result (see \cite[Theorem 2, p. 32]{Postkinov}).
\begin{thm} \label{T1}
Let $\sum_{k=0}^{\infty}b_kt^k$ be convergent for $|t|<1$. 
Assume that there exist $\lambda\geq 0$ and $C>0$ such that 
$b_k\geq -C(k+1)^{(\lambda-1)}$ for all $k$ and 
\[\lim_{t\to1^-}(1-t)^{\lambda}\sum_{k=0}^{\infty}b_kt^k=0.\]
Then 
\[\lim_{n\to\infty}\frac{\sum_{k=0}^nb_k}{(n+1)^{\lambda}}=0.\]
\end{thm}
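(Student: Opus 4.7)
This is the one-sided Hardy--Littlewood--Karamata Tauberian theorem; my strategy is to reduce to the case of nonnegative coefficients (where Karamata's theorem applies cleanly) and then subtract off the auxiliary series.

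Assume first that $\lambda > 0$, and set $a_k := b_k + C(k+1)^{\lambda-1}$, which is nonnegative by the Tauberian hypothesis. Let $g(t) = \sum_{k=0}^{\infty} a_k t^k$. Then
\[ g(t) = \sum_{k=0}^{\infty} b_k t^k + C \sum_{k=0}^{\infty} (k+1)^{\lambda-1} t^k, \]
and the classical asymptotic $\sum_{k=0}^{\infty} (k+1)^{\lambda-1} t^k \sim \Gamma(\lambda)(1-t)^{-\lambda}$ as $t \to 1^-$, combined with the vanishing of $(1-t)^{\lambda} \sum b_k t^k$, yields $(1-t)^{\lambda} g(t) \to C\Gamma(\lambda)$. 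Karamata's Tauberian theorem for nonnegative sequences then gives
\[ \sum_{k=0}^{n} a_k \sim \frac{C\,\Gamma(\lambda)\,(n+1)^{\lambda}}{\Gamma(\lambda+1)} = \frac{C\,(n+1)^{\lambda}}{\lambda}. \]
An elementary integral comparison shows $\sum_{k=0}^{n} C(k+1)^{\lambda-1} \sim C(n+1)^{\lambda}/\lambda$ as well, so subtracting cancels the leading terms and produces $\sum_{k=0}^{n} b_k = o((n+1)^{\lambda})$, which is the desired conclusion.

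The main obstacle is Karamata's theorem itself, since passing from Abel-type summability to Ces\`aro-type asymptotics for the partial sums genuinely requires the one-sided control on $b_k$. The classical route extends the hypothesis by linearity and the substitution $t \mapsto t^{m+1}$ to
\[ (1-t)^{\lambda} \sum_{k=0}^{\infty} a_k P(t^k)\,t^k \to L \int_0^1 P(u)\,\lambda u^{\lambda-1}\,du \]
for every polynomial $P$, then uses Weierstrass to sandwich the indicator $\mathbf{1}_{[1/e,\,1]}$ between polynomials $P^{\pm}$ on $[0,1]$ (nonnegativity of $a_k$ preserves the inequality after multiplication), and finally takes $t = e^{-1/n}$ so that the restricted Abel sum becomes $\sum_{k=0}^{n} a_k$ up to a negligible error. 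The case $\lambda = 0$ does not fit this template because $\sum (k+1)^{-1}$ diverges only logarithmically and the reduction above loses information; the conclusion there follows instead from the Tauber--Landau theorem, proved by comparing $\sum_{k=0}^{n} b_k$ directly to the Abel sum at $t_n = 1 - 1/(n+1)$ and bounding the difference via the one-sided estimate $b_k \geq -C/(k+1)$.
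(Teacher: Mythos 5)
The paper does not prove this theorem at all: it is quoted verbatim from Postnikov's monograph on Tauberian theory (\cite[Theorem 2, p.~32]{Postkinov}) and used as a black box, so there is no in-paper argument to compare against. Your reduction is the standard and correct one: for $\lambda>0$, adding $C(k+1)^{\lambda-1}$ produces nonnegative coefficients, the asymptotic $\sum_{k\ge 0}(k+1)^{\lambda-1}t^k\sim\Gamma(\lambda)(1-t)^{-\lambda}$ upgrades the hypothesis to $(1-t)^{\lambda}g(t)\to C\Gamma(\lambda)$, Karamata gives $\sum_{k\le n}a_k\sim C(n+1)^{\lambda}/\lambda$, and the integral comparison for $\sum_{k\le n}C(k+1)^{\lambda-1}$ cancels the leading term; the case $\lambda=0$ is exactly the Hardy--Littlewood--Landau one-sided theorem, since $b_k\ge -C/(k+1)$ implies $kb_k\ge -C$. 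Two small remarks. First, you are still invoking Karamata (and Landau) as known results rather than proving them, which is no worse than what the paper does, but be aware your ``proof'' is really a reduction to two classical theorems. Second, in your sketch of Karamata's proof the limiting measure is wrong for general $\lambda$: with the substitution $t\mapsto t^{m+1}$ one gets $(1-t)^{\lambda}\sum_k a_k t^k P(t^k)\to L\cdot\frac{1}{\Gamma(\lambda)}\int_0^{\infty}s^{\lambda-1}e^{-s}P(e^{-s})\,ds$, i.e.\ the measure $\frac{1}{\Gamma(\lambda)}\bigl(\log(1/u)\bigr)^{\lambda-1}du$ on $[0,1]$, not $\lambda u^{\lambda-1}du$ (the two agree only when $\lambda=1$; test $P(u)=u^m$, where the left side tends to $L(m+1)^{-\lambda}$ while your formula gives $L\lambda/(m+\lambda)$). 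This slip lives only in the expository sketch and does not affect the validity of the main argument.
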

We say a set $K\subseteq \C^2$ contains a vertical non-trivial 
analytic disc if there exist $a\in \C$ and $r>0$ such that 
$\{(a,\xi):|\xi|<r\}\subset K$. Similarly, $K$ contains a horizontal 
non-trivial analytic disc if $\{(\xi,a):|\xi|<r\}\subset K$. 

\begin{lemma}\label{LemToepQhomo}
Let $\D$ be a bounded convex Reinhardt domain  in $\C^2$ 
and  $\phi\in C(\Dc)$ be a quasi-homogeneous function with 
multi-degree $J\in \mathbb{Z}^2$. Assume that $b\D$ 
contains a non-trivial vertical analytic disc and 
$\widetilde{T_{\phi}}=0$ on $b\D$. Then $\lambda'_{\alpha}\to 0$ 
as  $\alpha_1\to\infty$ for all  $\alpha_2\in \mathbb{N}_0$ 
where $T_{\phi}e_{\alpha}=\lambda'_{\alpha}e_{\alpha+J}$ for  
$\alpha \in \mathbb{N}_0^2$. 
\end{lemma}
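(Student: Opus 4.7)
The plan is to translate the hypothesis $\widetilde{T_\phi}\to 0$ along approaches to the flat boundary piece into a Tauberian condition on a one-variable power series whose coefficients encode $\lambda'_{(m,k)}$, and then combine Theorem~\ref{T1} with the integral representation \eqref{Q}. Starting from \eqref{EqnEigT} and the series $K(w,z)=\sum_\alpha e_\alpha(w)\overline{e_\alpha(z)}$ one obtains
\[
\widetilde{T_\phi}(z)\,K(z,z) \;=\; z^J\sum_{\alpha,\,\alpha+J\in \mathbb{N}_0^2} \mu_\alpha\, |z_1|^{2\alpha_1}|z_2|^{2\alpha_2},\qquad \mu_\alpha=\frac{\lambda'_\alpha}{\|z^\alpha\|\,\|z^{\alpha+J}\|},
\]
in which the quasi-homogeneity of $\widetilde{T_\phi}$ is entirely absorbed into the factor $z^J$.

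Let $\{(a,\xi):|\xi|<r\}\subset b\D$ be the vertical analytic disc given by the hypothesis, $\rho=|a|$, and fix $t\in(0,r)$. Completeness of $\D$ places $z_s=(sa,t)\in\D$ for $s\in(0,1)$ with $z_s\to(a,t)\in b\D$; hence $\widetilde{T_\phi}(z_s)\to 0$. A convexity analysis of the shadow $D^+=\{(|z_1|,|z_2|):z\in\D\}$ (combined with complete Reinhardtness) shows that the flat segment $\{\rho\}\times[0,r]\subset\partial D^+$ is the maximizer of the first coordinate on $D^+$ at heights in $[0,r]$, and Laplace-type analysis of the polar formula $\|z^\alpha\|^2=(2\pi)^2\int_{D^+}x^{2\alpha_1+1}y^{2\alpha_2+1}\,dx\,dy$ then gives $\|z^{(m,k)}\|^2\sim \pi^2 \rho^{2m+2}r^{2k+2}/((m+1)(k+1))$ and the growth bound $K(z_s,z_s)\leq C(t)(1-s^2)^{-2}$. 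Writing $\sigma=s^2$ and dividing out the nonvanishing factor $(sa)^{j_1}t^{j_2}/K(z_s,z_s)$, the hypothesis becomes
\[
(1-\sigma)^{2}\sum_{m\geq 0}b_m(t)\,\sigma^m \longrightarrow 0,\qquad \sigma\to 1^-,
\]
with $b_m(t)=\rho^{2m}\sum_{k\geq 0}\mu_{(m,k)}\,t^{2k}$. These asymptotics also give $|b_m(t)|\leq C(t)(m+1)$, so the lower bound $b_m(t)\geq -C(t)(m+1)$ required by Theorem~\ref{T1} (with $\lambda=2$) is available, and the theorem yields $(n+1)^{-2}\sum_{m=0}^{n}b_m(t)\to 0$.

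To pass from this averaged conclusion to the pointwise statement $\lambda'_{(m,k)}\to 0$ for each fixed $k$, I would return to \eqref{Q}:
\[
\lambda'_{(m,k)} \;=\; \frac{(2\pi)^2}{\|z^{(m,k)}\|\,\|z^{(m+j_1,k+j_2)}\|}\int_{D^+}\varphi(x,y)\,x^{2m+j_1+1}y^{2k+j_2+1}\,dx\,dy.
\]
The Tauberian step above, applied simultaneously for all $t\in(0,r)$, should force the boundary trace $\varphi(\rho,\cdot)\equiv 0$ on $[0,r]$. The weight $x^{2m+j_1+1}$ concentrates as $m\to\infty$ on $\{x=\rho\}$, and once $\varphi(\rho,\cdot)\equiv 0$ is available, a Watson-type estimate (valid for merely continuous $\varphi$ via an $\varepsilon$-split of the $x$-integral near $x=\rho$, plus an exponentially small contribution from heights $y>r$ where the maximum $\Phi(y)$ of $x$ on $D^+$ satisfies $\Phi(y)<\rho$) delivers $\lambda'_{(m,k)}\to 0$ at the required rate.

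The main obstacle is precisely the bridge from the Cesaro Tauberian output of Theorem~\ref{T1} to the pointwise vanishing $\varphi(\rho,\cdot)\equiv 0$ on $[0,r]$. In the bidisc case this step is transparent: $\widetilde{T_\phi}(sa,\xi)$ converges as $s\to 1^-$ to the one-variable Berezin transform of the slice $\phi(a,\cdot)$, whose $L^\infty$-injectivity kills it. For a general convex Reinhardt domain the analogous slice-limit kernel must be extracted from the abstract series representation and the interchange of limits with the blowing-up Bergman kernel justified; identifying this slice-limit object (perhaps by iterated use of Theorem~\ref{T1} in the second variable, or by exploiting Corollary~\ref{CorZero} together with Lemma~\ref{LemFourierBerezin}) is the most delicate part of the argument.
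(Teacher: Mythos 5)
Your overall strategy---restrict the Berezin transform to a slice approaching the vertical boundary disc, rewrite it as $(1-t)^2\sum_m(m+1)A(m,|z_2|)t^m$ with $t=|z_1|^2$, and invoke the Tauberian Theorem~\ref{T1}---is the same as the paper's. But there is a genuine gap exactly where you flag one. You apply Theorem~\ref{T1} with $\lambda=2$ to the coefficients $b_m\approx(m+1)A(m,|z_2|)$, which only yields $(n+1)^{-2}\sum_{m\le n}b_m\to 0$, an averaged statement that does not imply $A(n,|z_2|)\to 0$ (take $b_m=m+1$ on a sparse set and $0$ elsewhere: the Ces\`aro average vanishes while $b_n/(n+1)$ does not, and positivity of $b_m$ is not available anyway since $\varphi$ changes sign). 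The paper avoids this by a summation-by-parts rewriting: $(1-t)^2\sum_m(m+1)A(m)t^m=(1-t)\sum_k b_kt^k$ with $b_0=A(0)$ and $b_k=(k+1)A(k)-kA(k-1)$, so that Theorem~\ref{T1} applies with $\lambda=1$ and the partial sums telescope exactly to $(n+1)A(n)$, giving the pointwise conclusion $A(n,|z_2|)\to 0$. The price is the uniform bound $\sup_k|b_k|<\infty$ (\eqref{EqnBdd1}), whose proof is the technical heart of the lemma and rests on the norm asymptotics \eqref{EqnRatio} and the difference estimate $\|z^{(\alpha_1-1,\alpha_2)}\|^2\|z^{(\alpha_1-1,\alpha_2)+J}\|^2-\|z^{\alpha}\|^2\|z^{\alpha+J}\|^2\approx\alpha_1^{-3}$ of \eqref{EqnNormDiff}; nothing in your proposal supplies this control on consecutive differences.

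Your proposed bridge from the averaged conclusion to $\lambda'_{(m,k)}\to 0$ also runs in the wrong direction relative to the paper's architecture: you want to first establish $\varphi(\rho,\cdot)\equiv 0$ and then conclude by a Laplace/Watson argument, but that vanishing statement is precisely what Theorem~\ref{Prop2} later deduces \emph{from} this lemma. The paper instead passes from $A(\alpha_1,|z_2|)\to 0$ (for every fixed $|z_2|<1$) to the individual coefficients by complexifying $A(\alpha_1,\cdot)$ in the second variable, using the uniform bound \eqref{EqnBdd} and Montel's theorem plus the identity principle to get uniform convergence of the holomorphic family $f_{\alpha_1}$ to $0$ on $\overline{\mathbb{D}}_{1/2}$, and then reading off each Taylor coefficient $\lambda'_{\alpha}/\bigl((\alpha_1+1)\|z^\alpha\|\|z^{\alpha+J}\|\bigr)$ via Cauchy estimates; since $(\alpha_1+1)\|z^\alpha\|\|z^{\alpha+J}\|$ tends to a nonzero constant for fixed $\alpha_2$ by \eqref{EqnRatio}, this gives $\lambda'_\alpha\to 0$. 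Both the summation-by-parts step and the normal-families extraction are missing ideas, not routine details, so the proposal as written does not close.
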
 
	
\begin{proof}
By assumption, $b\D$ contains a non-trivial vertical analytic disc.  
In the interest of simplifying the computations below, 
without loss of generality (we apply a dilation, if necessary), 
we assume that 
\[\mathbb{D}^2=\{z\in \C^2: |z_1|<1, |z_2|< 1\}  
\text{ and } U=\{z\in \C^2: |z_1|< 1, |z_2|< s\}\]
such that $\mathbb{D}^2\subset \D\subset U$ (see Figure \ref{Fig}). 
\begin{figure}[t] 
\scalebox{0.8}{
\begin{tikzpicture}
\draw[thick,->] (0,0) -- (6,0) node[anchor=north west] {};
\draw[thick,->] (0,0) -- (0,6) node[anchor=south east] {};
\node[draw=white] at (-0.5,6) {$|z_2|$};
\draw (4,0) -- (4,2);
\draw (4,2) arc (45:88:6cm);
\draw [fill] (4,2) circle[radius=2pt];
\draw [fill] (0,3.75) circle[radius=2pt];
\draw [fill] (4,0) circle[radius=2pt];
\node[draw=white] at (-0.45,3.75) {$s$};
\node[draw=white] at (5,2) {$(1,1)$};
\node[draw=white] at (4,-0.4) {$1$};
\node[draw=white] at (6,-0.4) {$|z_1|$};
\node[draw=white] at (1.9,2.5) {$\Omega$};
\end{tikzpicture}}
\caption{Convex Reinhardt Domain} \label{Fig}
\end{figure}
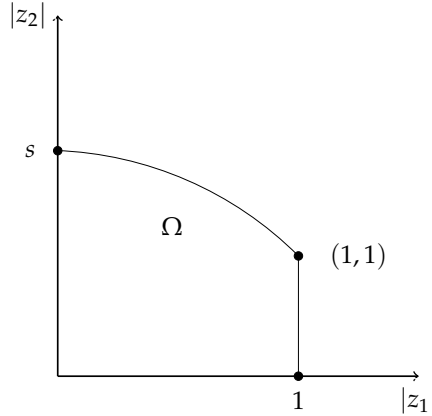 
We note that $\lambda'_{\alpha}=0$ if $\alpha+J\not\in \mathbb{N}_0^2$ 
and by \eqref{EqnEigT}  
\[\lambda'_{\alpha}= \int_{\D} \varphi(z) |e_{\alpha}(z)|| e_{\alpha+J}(z)|dV(z)\] 
$\alpha+J\in \mathbb{N}_0^2$.
We calculate the Berezin transform of $T_{\phi}$ at $z\in\mathbb{D}^2$.
\begin{align*}
\widetilde{T_{\phi}}(z)
&=\langle T_{\phi}k_z,k_z\rangle 
= \frac{1}{K(z,z)} \langle T_{\phi} K(\cdot,z), K(\cdot,z) \rangle \\
& =\frac{1}{K(z,z)} \sum_{\alpha,\beta \in \mathbb{N}^2_0} 
	\overline{e_{\alpha}(z)} e_{\beta}(z) \langle T_{\phi}e_{\alpha}, 
	e_{\beta}\rangle\\
&= \frac{1}{K(z,z)}\sum_{\alpha\in \mathbb{N}^2_0}
\frac{\zb^{\alpha}z^{\alpha+J}}{\|z^{\alpha}\|\|z^{\alpha+J}\|}\lambda_{\alpha}\\
& =\frac{K_{\mathbb{D}^2}(z,z)}{K(z,z)} \frac{z^J}{K_{\mathbb{D}^2}(z,z)} 
	\sum_{\alpha\in \mathbb{N}^2_0}
	\frac{|z^{\alpha}|^2}{\|z^{\alpha}\|\|z^{\alpha+J}\|}\lambda_{\alpha}
\end{align*}
where $K_{\mathbb{D}^2}$ denotes the Bergman kernel of $\mathbb{D}^2$. 
We note that \cite[Theorem 12.1.29]{JPBook} implies that for $|z_2|<1$ fixed 
there exists $C>1$ such that 
$1\leq K_{\mathbb{D}^2}(z,z)/K(z,z)\leq C$ for $|z_1|<1$.  
Then $\widetilde{T_{\phi}}(z)\to 0$ as $z\to b\D$ implies that for any 
fixed $z_2$ with $|z_2|<1$ we have 
\[ \frac{z^J}{K_{\mathbb{D}^2}(z,z)}  \sum_{\alpha\in \mathbb{N}^2_0}
\frac{|z^{\alpha}|^2}{\|z^{\alpha}\|\|z^{\alpha+J}\|}
\lambda'_{\alpha}  \to 0  \text{ as } |z_1|\to 1^-.\]

We write 
\begin{align*}
&\frac{z^J}{K_{\mathbb{D}^2}(z,z)}  \sum_{\alpha\in \mathbb{N}^2_0}
	\frac{|z^{\alpha}|^2}{\|z^{\alpha}\|\|z^{\alpha+J}\|} 
	\lambda'_{\alpha} \\  
&=\pi^2z^J(1-|z_1|^2)^2(1-|z_2|^2)^2 	
	\sum_{\alpha\in \mathbb{N}^2_0} 
	\frac{|z_1|^{2\alpha_1}|z_2|^{2\alpha_2}}{\|z^{\alpha}\|\|z^{\alpha+J}\|}
	\lambda'_{\alpha} \\ 
&=z^J \left(1-|z_1|^2\right)^2 
	\sum_{\alpha_1\in \mathbb{N}_0}(\alpha_1+1)A(\alpha_1, |z_2|) 
	|z_1|^{2\alpha_1} 
\end{align*}
where
\[ A(\alpha_1,|z_2|) =\pi^2 (1-|z_2|^2)^2 
\sum_{\alpha_2\in \mathbb{N}_0} \frac{|z_2|^{2\alpha_2}}{(\alpha_1+1) 
	\|z^{\alpha}\|\|z^{\alpha+J}\|}\lambda'_{\alpha}.\]
Let us denote $t=|z_1|^2$. Then 
\[\lim_{t\to 1^{-}}(1-t)^2
\sum_{\alpha_1\in  \mathbb{N}_0} (\alpha_1+1)A(\alpha_1,|z_2|)t^{\alpha_1}=0.\]

We will use Theorem \ref{T1} to conclude that $A(\alpha_1,|z_2|) \to 0$ 
	as $\alpha_1\to\infty$. To that end, we rewrite the sum above as 
\begin{align*}
(1-t)^2
&\sum_{\alpha_1\in  \mathbb{N}_0} (\alpha_1+1)A(\alpha_1,|z_2|)t^{\alpha_1} 
	=(1-t)A(0,|z_2|)\\
&+ (1-t)\left(\sum_{\alpha_1\in  \mathbb{N}} 
 	\left((\alpha_1+1)A(\alpha_1,|z_2|)- 
	\alpha_1A(\alpha_1-1,|z_2|)\right)t^{\alpha_1} \right) 
\end{align*} 
and we want to show that 
\begin{align}\label{EqnBdd1} 
\sup\{|(\alpha_1+1)A(\alpha_1,|z_2|)- \alpha_1A(\alpha_1-1,|z_2|)|:
\alpha_1\in  \mathbb{N}\}<\infty
\end{align}  
for any fixed $z_2\in \mathbb{D}$. We note that 
\begin{align*}
&|(\alpha_1+1)A(\alpha_1,|z_2|)- \alpha_1A(\alpha_1-1,|z_2|)| \\
&\leq
	\pi^2 (1-|z_2|^2)^2 
	\sum_{\alpha_2\in \mathbb{N}_0} |z_2|^{2\alpha_2} 
	\left|\frac{\lambda'_{\alpha_1,\alpha_2}}{\|z^{\alpha}\|\|z^{\alpha+J}\|}
	-\frac{\lambda'_{\alpha_1-1,\alpha_2}}{\|z^{(\alpha_1-1,\alpha_2)}\|
	\|z^{(\alpha_1-1,\alpha_2)+J}\|}\right|. 
\end{align*} 
Then to prove \eqref{EqnBdd1} it is enough to show that 
\begin{align}\label{EqnLambda'}
\sup\left\{\left|\frac{\lambda'_{\alpha_1,\alpha_2}}{
	\|z^{\alpha}\|\|z^{\alpha+J}\|}
-\frac{\lambda'_{\alpha_1-1,\alpha_2}}{\|z^{(\alpha_1-1,\alpha_2)}\|
	\|z^{(\alpha_1-1,\alpha_2)+J}\|}\right|:
\alpha_1\in  \mathbb{N}\right\}<\infty.
\end{align}  

Below we use the notation $x=|z_1|,$ and $ y=|z_2|$. 
We also choose $\rho_1$ so that  the inequality
$y<\rho_1(x)$ defines $|\D|=\{(|z_1|,|z_2|)\in\mathbb{R}^2:z\in \D\}$, 
the shadow of the domain $\D$ 
(the representation of the domain in the absolute space). 
Since $\lambda'_{\alpha}=0$ for $\alpha+J\not\in\mathbb{N}_0^2$, 
in the following computations we will assume that 
$\alpha+J\in\mathbb{N}_0^2$.
\begin{align*}
\lambda'_{\alpha} 
=&\, \frac{1}{\|z^{\alpha}\|
	\|z^{\alpha+J}\|}\int\varphi(z)|z^{\alpha}||z^{\alpha+J}|dV(z) \\
=&\,\frac{4\pi^2}{\|z^{\alpha}\|\|z^{\alpha+J}\|}
	\int_0^1x^{2\alpha_1+j_1+1} 
	\int_0^{\rho_1(x)}\varphi(x,y)y^{2\alpha_2+j_2+1}dydx\\
=&\,\frac{4\pi^2}{\|z^{\alpha}\|\|z^{\alpha+J}\|}
	\int_0^1x^{2\alpha_1+j_1+1} \psi'(x,\alpha_2)dx
\end{align*}
where 
\begin{align}\label{EqnPsi'}
\psi'(x,\alpha_2)= \int_0^{\rho_1(x)}\varphi(x,y)y^{2\alpha_2+j_2+1}dy.
\end{align}  
We note that $\psi'(\cdot,\alpha_2)$ is continuous on $[0,1]$. 

Next we want to show that 
\begin{align}\label{EqnRatio}
\lim_{\alpha_1\to\infty}\frac{(\alpha_1+1)(\alpha_2+1)}{\pi^2} 
\|z^{\alpha}\|^2  =1.
\end{align} 
Let $0<\ep<1$. Since $\rho_1$ is a positive decreasing function we have 
\begin{align*}
\|z^{\alpha}\|^2=
&\, 4\pi^2\int_0^1\int_0^{\rho_1(x)}x^{2\alpha_1+1}y^{2\alpha_2+1}dydx\\
= &\, \frac{2\pi^2}{\alpha_2+1}\int_0^1x^{2\alpha_1+1}
	(\rho_1(x))^{2\alpha_2+2}dx\\
\leq &\frac{2\pi^2}{\alpha_2+1}\int_0^{1-\ep}x^{2\alpha_1+1}
	(\rho_1(x))^{2\alpha_2+2}dx
	+ \frac{2\pi^2}{\alpha_2+1}\int_{1-\ep}^1x^{2\alpha_1+1}
	(\rho_1(1-\ep))^{2\alpha_2+2}dx\\
\leq & \frac{2\pi^2}{\alpha_2+1}\int_0^{1-\ep}x^{2\alpha_1+1}
	(\rho_1(x))^{2\alpha_2+2}dx
	+\frac{\pi^2}{(\alpha_1+1)(\alpha_2+1)}
	(\rho_1(1-\ep))^{2\alpha_2+2}.
\end{align*}
Then 
\[\limsup_{\alpha_1\to\infty}\frac{(\alpha_1+1)(\alpha_2+1)}{\pi^2} 
\|z^{\alpha}\|^2  \leq (\rho_1(1-\ep))^{2\alpha_2+2}.\]
Then, the fact that 
$\|z^{\alpha}\|^2\geq \|z^{\alpha}\|^2_{L^2(\mathbb{D}^2)}
=\pi^2/(\alpha_1+1)(\alpha_2+1)$ and $\rho_1(1)=1$ 
imply \eqref{EqnRatio}.

Next we estimate 
\begin{align*}
&  \frac{\lambda'_{\alpha_1,\alpha_2}}{
	\|z^{\alpha}\|\|z^{\alpha+J}\|}
	-\frac{\lambda'_{\alpha_1-1,\alpha_2}}{\|z^{(\alpha_1-1,\alpha_2)}\|
	\|z^{(\alpha_1-1,\alpha_2)+J}\|}\\  
=&\frac{4\pi^2}{\|z^{\alpha}\|^2\|z^{\alpha+J}\|^2}
	\int_0^1x^{2\alpha_1+j_1+1} \psi'(x,\alpha_2)dx \\
&-\frac{4\pi^2}{\|z^{(\alpha_1-1,\alpha_2)}\|^2
	\|z^{(\alpha_1-1,\alpha_2)+J}\|^2}
	\int_0^1x^{2\alpha_1+j_1-1} \psi'(x,\alpha_2)dx\\
=&\frac{4\pi^2}{\|z^{\alpha}\|^2\|z^{\alpha+J}\|^2}
	\int_0^1(x^{2\alpha_1+j_1+1}-x^{2\alpha_1+j_1-1}) 
	\psi'(x,\alpha_2)dx\\
&+\left(\frac{4\pi^2}{\|z^{\alpha}\|^2\|z^{\alpha+J}\|^2}
	-\frac{4\pi^2}{\|z^{(\alpha_1-1,\alpha_2)}\|^2
	\|z^{(\alpha_1-1,\alpha_2)+J}\|^2}\right)
	\int_0^1x^{2\alpha_1+j_1-1} \psi'(x,\alpha_2)dx.
\end{align*}
The first integral on the right hand side of the last equality above 
is bounded because $\psi'$ is bounded, 
$x^{2\alpha_1+j_1+1}\leq x^{2\alpha_1+j_1-1}$ on $[0,1]$, 
\[\int_0^1(x^{2\alpha_1+j_1+1}-x^{2\alpha_1+j_1-1})dx 
= -\frac{2}{(2\alpha_1+j_1+2)(2\alpha_1+j_1)}\] 
and, by \eqref{EqnRatio}, 
$(\alpha_1+1)\|z^{\alpha}\|^2\to\pi^2/(\alpha_2+1)$ as $\alpha_1\to\infty$. 

We will use the notation $A(\alpha)\approx \alpha_1$ below 
to mean that there exists $C,D>0$ independent of $\alpha_1$ 
such that for fixed $\alpha_2$ we have 
$C\alpha_1\leq A(\alpha)\leq D\alpha_1$ for all $\alpha_1$.
Then to verify \eqref{EqnBdd1}, it is enough to show that 
\[\frac{4\pi^2}{\|z^{\alpha}\|^2\|z^{\alpha+J}\|^2}
-\frac{4\pi^2}{\|z^{(\alpha_1-1,\alpha_2)}\|^2
\|z^{(\alpha_1-1,\alpha_2)+J}\|^2} \approx \alpha_1.\] 
This is the same as showing that 
\begin{align*} 
 \|z^{(\alpha_1-1,\alpha_2)}\|^2\|z^{(\alpha_1-1,\alpha_2)+J}\|^2
	-  \|z^{\alpha}\|^2\|z^{\alpha+J}\|^2
\approx \alpha_1^{-3}
\end{align*}
as \eqref{EqnRatio} implies that 
$ \|z^{(\alpha_1-1,\alpha_2)}\|^2\|z^{(\alpha_1-1,\alpha_2)+J}\|^2
\|z^{\alpha}\|^2\|z^{\alpha+J}\|^2\approx \alpha_1^{-4}$. 
To that end, for a fixed $\alpha_2$, we compute 
\begin{align}\nonumber 
&\|z^{(\alpha_1-1,\alpha_2)}\|^2\|z^{(\alpha_1-1,\alpha_2)+J}\|^2
	- \|z^{\alpha}\|^2\|z^{\alpha+J}\|^2\\
\nonumber 	\approx &\left(\int_0^1x^{2\alpha_1-1}
	(\rho_1(x))^{2\alpha_2+2}dx\right) \left(\int_0^1x^{2\alpha_1+2j_1-1}
	(\rho_1(x))^{2\alpha_2+2j_2+2}dx\right) \\
\nonumber	& -\left(\int_0^1x^{2\alpha_1+1}
	(\rho_1(x))^{2\alpha_2+2}dx\right)\left(\int_0^1x^{2\alpha_1+2j_1+1}
	(\rho_1(x))^{2\alpha_2+2j_2+2}dx\right) \\
\nonumber	=&\left(\int_0^1(x^{2\alpha_1-1}-x^{2\alpha_1+1})
	(\rho_1(x))^{2\alpha_2+2}dx\right)\left(\int_0^1x^{2\alpha_1+2j_1-1}
	(\rho_1(x))^{2\alpha_2+2j_2+2}dx\right)\\
\nonumber	& +\left(\int_0^1x^{2\alpha_1+1}
	(\rho_1(x))^{2\alpha_2+2}dx\right)
	\left(\int_0^1(x^{2\alpha_1+2j_1-1}-x^{2\alpha_1+2j_{1}+1})
	(\rho_1(x))^{2\alpha_2+2j_2+2}dx \right)\\
\label{EqnNormDiff} 	\approx & \, \alpha_1^{-2}\alpha_1^{-1}
	+\alpha_1^{-1}\alpha_1^{-2}\approx\alpha_1^{-3}.
\end{align}
 Hence, we verified  \eqref{EqnBdd1}. 
 
 Then  we  use Theorem \ref{T1} on 
 $(1-t)\sum_{k=0}^{\infty}b_kt^k$  where $b_0=A(0,|z_2|)$ and 
 $b_k=(k+1)A(k,|z_2|)- kA(k-1,|z_2|)$ for $k\geq 1$ to conclude  
 \[\frac{\sum_{k=0}^{\alpha_1}b_k}{\alpha_1+1}
 =A({\alpha_1},|z_2|)\to 0 \text{ as } \alpha_1\to \infty\]
 for any fixed $z_2\in \mathbb{D}$. 

Next,  we will show that $\{A(\alpha_1, \cdot)\}$ 
is uniformly bounded depending on $j_2$. Below we use the facts that 
$\|z^{\alpha+J}\|_{A^2(\mathbb{D}^2)}\leq \|z^{\alpha}\|$ and 
$|\lambda'_{\alpha}|\leq \|T_{\phi}\| $. 
\begin{align}\nonumber 
|A(\alpha_1,|z_2|)|  
&\leq \pi^2 (1-|z_2|^2)^2 
\sum_{\alpha_2\in \mathbb{N}_0} \frac{|z_2|^{2\alpha_2}}{(\alpha_1+1) 
	\|z^{\alpha}\|_{A^2(\mathbb{D}^2)}\|z^{\alpha+J}\|_{A^2(\mathbb{D}^2)}}
|\lambda'_{\alpha}| \\
\nonumber &\leq \frac{\|T_{\phi}\| (1-|z_2|^2)^2}{\sqrt{\alpha_1+1}}	
\sum_{\alpha_2\in \mathbb{N}_0} 
\sqrt{(\alpha_2+1)(\alpha_1+j_1+1)(\alpha_2+j_2+1)}|z_2|^{2\alpha_2}\\
\label{EqnBdd}&\leq \frac{\|T_{\phi}\| 
	\sqrt{\alpha_1+j_1+1}}{\sqrt{\alpha_1+1}}(1-|z_2|^2)^2	
\sum_{\alpha_2\in \mathbb{N}_0} 
\sqrt{|j_2|+1}(\alpha_2+1)|z_2|^{2\alpha_2}\\
\nonumber &\leq \|T_{\phi}\| \sqrt{|j_2|+1} 
\frac{\sqrt{\alpha_1+j_1+1}}{\sqrt{\alpha_1+1}}. 
\end{align}

Now, we consider the complexified form of $A(\alpha_1, |z_2|)$  as 
\[A(\alpha_1, \xi) =\pi^2 (1-\xi^2)^2 
\sum_{\alpha_2\in \mathbb{N}_0} \frac{\xi^{2\alpha_2}}{(\alpha_1+1) 
	\|z^{\alpha}\|\|z^{\alpha+J}\|}\lambda'_{\alpha}.\]
Let us define
\[f_{\alpha_1}(\xi)=\pi^{-2}(1-z^2)^{-2} A(\alpha_1,\xi) 
=\sum_{\alpha_2\in \mathbb{N}_0} \frac{\xi^{2\alpha_2}}{(\alpha_1+1) 
	\|z^{\alpha}\|\|z^{\alpha+J}\|}\lambda'_{\alpha} \]
for all $\xi\in \mathbb{D}$ and $\alpha_1\in \mathbb{N}_0$. Then, 
$f_{\alpha_1}$ is holomorphic on $\mathbb{D}$
for every $\alpha_1\in \mathbb{N}_0$ 
as $|\lambda'_{\alpha}|\leq \|T_{\phi}\|$ and $(\alpha_1+1) 
	\|z^{\alpha}\|\|z^{\alpha+J}\|\approx \alpha_2^{-2}$. Furthermore,  
$A(\alpha_1,|\xi|)\to 0$ for $\alpha_1\to\infty$ implies that  
$f_{\alpha_1}\to 0$ on the real line in $\mathbb{D}$. 

Next we will show that $f_{\alpha_1}\to 0$ uniformly on 
$\overline{\mathbb{D}}_{1/2}=\{\xi\in \C:|\xi|\leq 1/2\}$ as 
$\alpha_1\to\infty$. First we note that computations in 
 \eqref{EqnBdd} imply $\{A(\alpha_1,\cdot)\}$ is uniformly 
 bounded on $\mathbb{D}$. 
Hence $\{f_{\alpha_1}\}$ is a uniformly bounded sequence 
on $\{\xi\in\C:|\xi|\leq 3/4\}$. Let $\{f_j\}$ 
be a subsequence of $\{f_{\alpha_1}\}$. Montel's theorem 
implies that there exists $\{f_{j_k}\}$, a subsequence of
$f_j$, that is convergent uniformly  on compact subsets 
of $\mathbb{D}_{3/4}$ to a holomorphic function $F$. 
Since $f_{\alpha_1}\to 0$ on the real line in the unit disc 
as $\alpha_1\to\infty$, $F=0$ on the real line in $\mathbb{D}_{3/4}$. 
Then the identity principle for holomorphic functions implies that 
$F=0$ on $\mathbb{D}$. Hence we showed that 
every subsequence of $\{f_{\alpha_1}\}$ has a further subsequence 
converging to zero uniformly on $\overline{\mathbb{D}}_{1/2}$. 
Since uniform convergence on a compact set is metrizable, 
we use the following well known fact to conclude that 
$f_{\alpha_1}\to 0$ uniformly on $\overline{\mathbb{D}}_{1/2}$.  
A sequence $\{x_j\}$ in a metrizable space converges to 
$x$ if and only if every subsequence $\{x_{j_k}\}$ 
has a further subsequence $\{x_{j_{k_s}}\}$ convergent to $x$.  
Finally, we use Cauchy estimates to conclude  that 
\[f^{(2\alpha_2)}_{\alpha_1}(0)
=\frac{(2\alpha_2)!\lambda'_{\alpha}}{(\alpha_1+1)\|z^{\alpha}\|\|z^{\alpha+J}\|}
\to 0 \text{ as }  \alpha_1\to\infty\] for every 
$\alpha_2\in \mathbb{N}_0$. Therefore, $\lambda'_{\alpha}\to 0$  as  
$\alpha_1\to\infty$ for all  $\alpha_2.$  		
\end{proof} 		

We note that the proof above shows that, by switching the roles 
of $\alpha_1$ and $\alpha_2$,  in case $b\D$ contains a non-trivial 
horizontal analytic disc, then $\lambda'_{\alpha}\to 0$ as  
$\alpha_2\to\infty$ for all  $\alpha_1.$  	 

Let $\D$ be a bounded domain in $\C^n$ and $\mu$ be a measure 
supported on $\Dc$.  We say $|k_z|^2dV\to \mu$ weakly as $z_j\to p$ if 
$\int \phi|k_{z_j}|^2dV\to \int \phi d\mu$ as $z_j\to p$ for all 
$\phi\in C(\Dc)$. We will use the following fact in the next lemma. 
If $K$ is a compact metric space, then $C(K)$ is a separable 
Banach space.  Hence the dual of the closed unit ball of 
$C(K)$ is weak-star metrizable (see, for instance, 
\cite[Theorem 5.1 in Ch V]{CBook}). This fact allows us 
to use sequences  with Alaoglu theorem 
in the proof of the lemma below. 

A compact set $K\subset b\D$ is said to be a holomorphic peak 
set if there exists a holomorphic function $F$ on $\D$ that 
is continuous on $\Dc$ such that $F=1$ on $K$ and $|F|<1$ 
on $\Dc\setminus K$.  

\begin{lemma}\label{LemConv}
Let $\D$ be a bounded domain in $\C^n$ , $K\subseteq b\D$ 
be a holomorphic peak set, $p\in K$, and $\phi\in C(\Dc)$ such that 
$\phi=0$ on $K$. Then $\|\phi k_z\|\to 0$ for $z\to p$. 
\end{lemma}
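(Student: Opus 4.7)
The plan is to argue by contradiction using the weak-star compactness of probability measures on $\Dc$. Suppose there exist $\varepsilon > 0$ and a sequence $z_j \to p$ with $\|\phi k_{z_j}\|^2 = \int_{\D} |\phi|^2 |k_{z_j}|^2 \, dV \geq \varepsilon$. Each $\mu_j := |k_{z_j}|^2 \, dV$ is a Borel probability measure on $\Dc$, so by Alaoglu together with the weak-star metrizability of the unit ball of $C(\Dc)^*$ recalled just before the lemma, we may pass to a subsequence and assume $\mu_j \to \mu$ weak-star for some probability measure $\mu$ on $\Dc$.

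The crux is to show $\operatorname{supp}\mu \subseteq K$. For this I would exploit the peak function $F$: since $F$ is bounded and holomorphic in $\D$, $F k_z$ lies in $A^2(\D)$, and the reproducing property of the normalized Bergman kernel gives
\[ \int_{\D} F(w)\,|k_z(w)|^2\, dV(w) \;=\; \langle F k_z, k_z \rangle \;=\; F(z). \]
Taking real parts and setting $g := 1 - \operatorname{Re} F$, one obtains $\int_{\D} g\,|k_z|^2\, dV = 1 - \operatorname{Re} F(z)$, which tends to $0$ as $z \to p$ since $F$ is continuous on $\Dc$ with $F(p) = 1$. Passing to the weak-star limit yields $\int_{\Dc} g\, d\mu = 0$. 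Now $g$ is continuous, nonnegative on $\Dc$, and vanishes exactly on $K$ (since $|F| < 1$ on $\Dc \setminus K$ forces $\operatorname{Re} F < 1$ there), so $g \geq 0$ together with $\int g \, d\mu = 0$ forces $\mu$ to be supported on $K$.

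Finally, because $\phi \in C(\Dc)$ vanishes on $K$ and $\mu$ is supported on $K$, the continuous function $|\phi|^2$ satisfies $\int_{\Dc} |\phi|^2\, d\mu = 0$. Combined with weak-star convergence,
\[ \int_{\D} |\phi|^2 |k_{z_j}|^2\, dV \;\longrightarrow\; \int_{\Dc} |\phi|^2\, d\mu \;=\; 0, \]
which contradicts the assumed lower bound $\varepsilon$. The only real obstacle is the identification of $\operatorname{supp}\mu$, and the holomorphic peak function reduces this to a clean positivity argument; the remaining steps (Alaoglu, metrizability, weak-star limits of continuous test functions) are standard.
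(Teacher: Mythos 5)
Your proof is correct and follows essentially the same route as the paper: pass to a weak-star convergent subsequence of the measures $|k_{z_j}|^2\,dV$ via Alaoglu and metrizability, use the peak function $F$ together with $\widetilde{F}(z)=F(z)$ to force the limit measure to live on $K$, and conclude from $\phi=0$ on $K$. The only difference is cosmetic (contradiction versus the subsequence-of-subsequence formulation), and your positivity argument with $g=1-\operatorname{Re}F$ cleanly supplies the step the paper leaves as ``one can show that the support of $\mu$ is contained in $K$.''
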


\begin{proof} 
Let $F\in C(\Dc)$ be a holomorphic function on $\D$ such that $F=1$ 
on $K$ and $|F|<1$ on $\Dc\setminus K$. Let us pick is a sequence 
$\{z_j\}\subset \D$ such that $z_j\to p$. Then by 
Alaoglu theorem there exists a subsequence $\{z_{j_k}\}$ such 
that $\{|k_{z_{j_k}}|^2dV\}$ converges to a probability measure 
$\mu$ weakly as $k\to\infty$. Then  
\[1=F(p)\leftarrow F(z_{j_k})=\widetilde{F}(z_{j_k})
=\int F|k_{z_{j_k}}|^2dV \to \int Fd\mu 
\text{ as } k\to\infty.\]
Then one can show that the support of $\mu$ is contained in $K$. 
%
%
%
%
Furthermore, 
\[\left\|\phi k_{z_{j_k}}\right\|^2
=\int|\phi|^2|k_{z_{j_k}}|^2dV\to \int |\phi|^2d\mu
=0  \text{ as } k\to\infty\]
because $\phi=0$ on $K$. 

So we showed that for any sequence $\{z_j\}$ converging to $p$ 
there exists a subsequence  $\{z_{j_k}\}$ such that 
$\|\phi k_{z_{j_k}}\|\to 0$ as $k\to \infty$. One can show 
that this is equivalent to $\|\phi k_z\|\to 0$ as $z\to p$. 
\end{proof}

The following theorem plays a vital role in the proof of 
Theorem \ref{ThmMain1}.

\begin{thm} \label{Prop2} 
Let $\D$ be a bounded convex Reinhardt domain  in $\C^2$ 
and  $\phi\in C(\Dc)$ be a quasi-homogeneous function with 
multi-degree $J\in \mathbb{Z}^2$. Then $\widetilde{T}_{\phi}(z)\to 0$ 
as $z \to b\D$ if and only if $\phi=0$ on $b\D$.
\end{thm}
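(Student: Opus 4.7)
Since $\D$ is convex, every $p\in b\D$ is a holomorphic peak point, so Lemma \ref{LemConv} applied with $K=\{p\}$ and the hypothesis $\phi(p)=0$ gives $\|\phi k_z\|\to 0$ as $z\to p$. By compactness of $b\D$ this upgrades (via a subsequence argument) to $\|\phi k_z\|\to 0$ as $z\to b\D$, and the estimate $|\widetilde{T_\phi}(z)|=|\langle\phi k_z,k_z\rangle|\le\|\phi k_z\|$ finishes this direction.

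\textbf{Hard direction.} Assume $\widetilde{T_\phi}\to 0$ on $b\D$. The plan is to prove that the shift coefficients $\lambda'_\alpha$ (where $T_\phi e_\alpha=\lambda'_\alpha e_{\alpha+J}$) satisfy $\lambda'_\alpha\to 0$ as $|\alpha|\to\infty$; this makes $T_\phi$ a compact weighted shift, and then the compactness characterization of \cite{RS24} forces $\phi=0$ on $b\D$. From the formula derived in Section 2,
\[
\widetilde{T_\phi}(z)=\frac{z^J}{K(z,z)}\sum_\alpha\frac{|z|^{2\alpha}}{\|z^\alpha\|\|z^{\alpha+J}\|}\lambda'_\alpha,
\]
the hypothesis says the real radial function $R(r)$ obtained by stripping the $z^J$ factor tends to zero as $r=|z|$ approaches any point of $\partial|\D|$ with both coordinates positive. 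I would split $\partial|\D|$ into three strata: axis segments (handled automatically by continuity of $\phi$ and quasi-homogeneity whenever the corresponding component of $J$ is nonzero), maximal vertical or horizontal straight edges of positive length, and strictly convex boundary points. Lemma \ref{LemToepQhomo} (together with its horizontal analogue obtained by swapping coordinates) disposes of any flat-edge stratum, giving $\lambda'_\alpha\to 0$ as $\alpha_1\to\infty$ (respectively $\alpha_2\to\infty$) with the other index held fixed. For a strictly convex boundary point $(p_1,p_2)$ with $p_1,p_2>0$, I would use concentration of probability: the distributions $\mu_\alpha(r):=r^{2\alpha}/(K(r,r)\|z^\alpha\|^2)$ on $\mathbb{N}_0^2$ concentrate, as $r\to(p_1,p_2)$, in a narrow cone of indices aligned with the outward normal to $\partial|\D|$ at $(p_1,p_2)$. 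The identity $R(r)=\sum_\alpha\mu_\alpha(r)(\|z^\alpha\|/\|z^{\alpha+J}\|)\lambda'_\alpha$, combined with the boundedness of $\|z^\alpha\|/\|z^{\alpha+J}\|$ (which tends to $1$), then says that failure of $\lambda'_\alpha\to 0$ along this cone would keep $|R(r)|$ bounded away from zero, contradicting the hypothesis. Sweeping $(p_1,p_2)$ over the strictly convex stratum exhausts a dense family of cone directions, and together with the uniform bound $|\lambda'_\alpha|\le\|T_\phi\|$ this yields $\lambda'_\alpha\to 0$ as $|\alpha|\to\infty$.

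\textbf{Main obstacle.} The concentration step for the strictly convex stratum is the delicate part: bounded convex Reinhardt domains in $\C^2$ are in general neither smooth nor log-convex, and no explicit formula for the Bergman kernel is available. Justifying the concentration requires Laplace-type asymptotics for $r^{2\alpha}/\|z^\alpha\|^2$ along rays $|\alpha|\to\infty$, together with uniform control at transition points of $\partial|\D|$ between flat edges and strictly convex arcs. An alternative at strictly convex points would be a peak-set approach via Lemma \ref{LemConv} applied to the torus orbit $\{(\eta_1 p_1,\eta_2 p_2):\eta\in\mathbb{T}^2\}$, which is a holomorphic peak set via Reinhardt averaging of the exponential of a real support functional; however, Fourier orthogonality forces the leading limit of $\widetilde{T_\phi}$ along such tori to vanish automatically whenever $J\ne 0$, so information about $\varphi(p_1,p_2)$ must be extracted from the subleading asymptotic, which brings us back to the same concentration estimates.
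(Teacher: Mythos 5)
Your overall strategy in the hard direction --- prove $\lambda'_\alpha\to 0$ as $|\alpha|\to\infty$, conclude that the weighted shift $T_\phi$ is compact, and then invoke \cite{RS24} --- is sound in principle but has a genuine gap exactly where you flag it: the concentration/Laplace asymptotics at the strictly convex stratum are never established, and Lemma \ref{LemToepQhomo} only yields the iterated limits $\lambda'_\alpha\to 0$ as $\alpha_1\to\infty$ for each fixed $\alpha_2$ (and its horizontal analogue), which is strictly weaker than $\lambda'_\alpha\to 0$ as $|\alpha|\to\infty$. The paper avoids this obstacle entirely and never proves $T_\phi$ compact directly. Instead it splits $b\D$ using \cite[Lemma 1]{ClS18} (only horizontal or vertical discs can occur) into disc points and non-disc points. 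At non-disc points it quotes \cite[Proposition 3.2]{FS98}: such points of a convex domain are holomorphic peak points, so $\widetilde{T}_\phi(z)=\widetilde{\phi}(z)\to\phi(p)$ as $z\to p$ by \cite[Lemma 15]{CSZ18}, forcing $\phi(p)=0$ with no asymptotic analysis of the coefficients at all. (Your worry about Fourier orthogonality killing the leading term applies to the torus orbit, not to the individual peak point $p$, which is what the paper actually uses.) On the disc stratum the paper also does more than you do: after Lemma \ref{LemToepQhomo} it uses the vertical disc as a peak \emph{set} (Lemma \ref{LemConv}) to reduce to $\phi$ independent of $z_1$, computes $\lim_{\alpha_1\to\infty}\lambda'_\alpha$ explicitly to obtain the vanishing of all moments $\int_0^1\varphi(1,y)y^{2\alpha_2+j_2+1}\,dy$, and concludes $\varphi(1,\cdot)=0$ by Stone--Weierstrass; your proposal stops at ``$\lambda'_\alpha\to 0$ along the edge,'' which by itself does not give $\phi=0$ there.

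Separately, the easy direction as you wrote it is incorrect: a point $p$ lying in the relative interior of an analytic disc in $b\D$ is \emph{not} a holomorphic peak point (a peak function would be identically $1$ on the disc by the maximum principle), so you cannot apply Lemma \ref{LemConv} with $K=\{p\}$ at every boundary point of a convex Reinhardt domain. This direction is repaired trivially the way the paper does it: $\phi=0$ on $b\D$ implies $T_\phi$ is compact by \cite{RS24} (convex domains have Lipschitz boundary), and $k_z\to 0$ weakly, so $\widetilde{T}_\phi\to 0$ on $b\D$.
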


\begin{proof} 
First we note that bounded convex Reinhardt domains are bounded 
pseudoconvex domains with Lipschitz boundaries as convex functions 
are locally Lipschitz (see, for instance, \cite{WayneState}). 
Then  \cite[Lemma 1]{RS24} implies that $k_z\to 0$ weakly as $z\to b\D$. 
Then $\phi=0$ on $b\D$ implies that $T_{\phi}$ is compact. 
So $\widetilde{T}_{\phi}(z)\to 0$ as $z \to b\D$. 

To prove the converse we will use the following fact:  
a bounded convex Reinhardt domain in $\C^2$ can have vertical 
or horizontal discs only (see  \cite[Lemma 1]{ClS18}). That is, 
if a boundary point is contained in an analytic disc then it is 
contained either in a horizontal or a vertical disc in the boundary. 
Then, without loss of generality, we assume that $\D$ has a vertical 
disc in the boundary. Hence, we assume that there exist bidiscs 
\[\mathbb{D}^2=\{z\in \C^2: |z_1|<1, |z_2|< 1\}  
\text{ and } U=\{z\in \C^2: |z_1|< 1, |z_2|< s\}\]
such that $\mathbb{D}^2\subset \D\subset U$ (see Figure \ref{Fig}). 

Let us assume that $T_{\phi}e_{\alpha}=\lambda'_{\alpha}e_{\alpha+J}$ 
 for $\alpha\in \mathbb{N}_0^2$. By Lemma \ref{LemToepQhomo} 
 we have $\lambda'_{\alpha}\to 0$ as $\alpha_1\to\infty$ for all  $\alpha_2.$ 
		
Next we will show that $\phi=0$ on the vertical discs in the boundary. 
Let $z_0\in \C$ such that $|z_0|=1$ we write 
$\phi=\phi(z_0,\cdot)+\phi-\phi(z_0,\cdot)$. 
Since the vertical disc at $z_1=z_0$ is a holomorphic peak set and   
$\phi-\phi(z_0,\cdot)$ vanish on this disc, Lemma \ref{LemConv} 
implies that 
\[\|(\phi-\phi(z_0,\cdot))k_z\|\to 0 \text{ as } z_1\to z_0.\]
Then 
\[\left|\widetilde{T}_{\phi}(z)-\widetilde{T}_{\phi(z_0,\cdot)}(z)\right| 
\leq \|(\phi-\phi(z_0,\cdot))k_z\|
\to 0 \text{ as } z_1\to z_0.\] 
Hence, $\widetilde{T}_{\phi(z_0,\cdot)}(z)\to 0$ as $z_1\to z_0$. 
Therefore, without loss of generality, we assume that $\phi$ 
is independent of $z_1$.

 We note that  
\[\phi(z_1,z_2)=\varphi(|z_1|,|z_2|)e^{iJ\cdot \theta}\] 
where  $\varphi$ is multi-radial continuous function independent of $|z_1|$. 
That is, $\varphi(|z_1|,|z_2|)=\varphi(1,|z_2|)$. We will show that 
$\varphi(1, \cdot)=0$. 

Lemma \ref{LemToepQhomo} and \eqref{Q} imply that for 
any fixed $\alpha_2$ we have  
\[\lambda'_{\alpha}=\int\varphi(z)|e_{\alpha}(z)||e_{\alpha+J}(z)|dV(z)\to 0 
\text{ as } \alpha_1\to\infty.\] 
We note that $\varphi(1,y)=\varphi(x,y)$  (as $\varphi$ is independent 
of $|z_1|$)  and 
\[\psi'(1,\alpha_2)= \int_0^1\varphi(1,y)y^{2\alpha_2+j_2+1}dy\]
as $\rho_1(1)=1$ where $\psi'$ is defined in \eqref{EqnPsi'}.  

In the following computations we use the inequality $x<\rho_2(y)$ 
to define $|\D|$ in $\mathbb{R}^2$. 
\begin{align*}
& \frac{4\pi^2}{\|z^{\alpha}\|\|z^{\alpha+J}\|}
	\int_0^1x^{2\alpha_1+j_1+1}\psi'(1,\alpha_2)dx \\
&= \frac{\int_0^1\int_0^1
	\varphi(1,y)x^{2\alpha_1+j_1+1}y^{2\alpha_2+j_2+1}dxdy}{
	\left(\int_0^{s}\int_0^{\rho_2(y)}x^{2\alpha_1+1}y^{2\alpha_2+1}dxdy
	\right)^{1/2}\left(\int_0^{s}\int_0^{\rho_2(y)}x^{2\alpha_1+2j_1+1}
	y^{2\alpha_2+2j_2+1}dxdy\right)^{1/2}}\\
&= \frac{\frac{2\sqrt{(\alpha_1+1)(\alpha_1+j_1+1)}}{2\alpha_1+j_1+2}
	\int_0^1\varphi(1,y)y^{2\alpha_2+j_2+1}
	dy}{\left(\int_0^{s}y^{2\alpha_2+1}(\rho_2(y))^{2\alpha_1+2}dy\right)^{1/2}
	\left(\int_0^{s}y^{2\alpha_2+2j_2+1}(\rho_2(y))^{2\alpha_1+2j_1+2}dy
	\right)^{1/2}}.
\end{align*}
Next we use the fact that $0\leq \rho_2(y)< 1$ for $1<y\leq s$ 
and $\rho_2(y)= 1$ for $0\leq y\leq 1$ together 
with the Lebesgue's dominated convergence theorem to conclude that 
\begin{align*}
\int_0^{s}y^{2\alpha_2+1}(\rho_2(y))^{2\alpha_1+2}dy
& \to \int_0^1y^{2\alpha_2+1}dy =\frac{1}{2\alpha_2+2}\\
	\int_0^{s}y^{2\alpha_2+2j_2+1}(\rho_2(y))^{2\alpha_1+2j_1+2}dy
& \to \int_0^1y^{2\alpha_2+2j_2+1}dy
	=\frac{1}{2\alpha_2+2j_2+2} 
\end{align*}
as  $\alpha_1\to \infty.$ Then 
\begin{align} \nonumber 
\frac{4\pi^2}{\|z^{\alpha}\|\|z^{\alpha+J}\|}&
	\int_0^1x^{2\alpha_1+j_1+1}\psi'(1,\alpha_2)dx  \\
\label{Eqn5}&\to 2\sqrt{(\alpha_2+1)(\alpha_2+j_2+1)}
	\int_0^1\varphi(1,y)y^{2\alpha_2+j_2+1}dy
\end{align} 
as $\alpha_1\to\infty$. 

Next we write
\begin{align*}
\frac{4\pi^2}{\|z^{\alpha}\|\|z^{\alpha+J}\|}\int_0^1x^{2\alpha_1+j_1+1}
	\psi'(x,\alpha_2)dx 
=&\, \frac{4\pi^2}{\|z^{\alpha}\|\|z^{\alpha+J}\|}\int_0^1x^{2\alpha_1+j_1+1}
	\psi'(1,\alpha_2)dx \\
&\,+\frac{4\pi^2}{\|z^{\alpha}\|\|z^{\alpha+J}\|}\int_0^1x^{2\alpha_1+j_1+1}
	(\psi'(x,\alpha_2)-\psi'(1,\alpha_2))dx.  
\end{align*} 
One can show that, using \eqref{EqnRatio},  the second term on the 
right hand side above goes to zero as $\alpha_1\to\infty$. 
Then \eqref{Eqn5} and Lemma \ref{LemToepQhomo}  imply that 
\begin{align*}
0\leftarrow \lambda'_{\alpha}
=&\, \frac{4\pi^2}{\|z^{\alpha}\|\|z^{\alpha+J}\|}
	\int_0^1x^{2\alpha_1+j_1+1}\psi'(x,\alpha_2)dx\\
\to &\, 2\sqrt{(\alpha_2+1)(\alpha_2+j_2+1)}
	\int_0^1\varphi(1,y)y^{2\alpha_2+j_2+1}	dy
\end{align*} 
 as  $\alpha_1\to\infty$
for any fixed $\alpha_2$.  Then 
\[0=\psi'(1,\alpha_2)= \int_0^1\varphi(1,y)y^{2\alpha_2+j_2+1}dy\] 
for every $\alpha_2$. However, an argument using Stone-Weierstrass theorem 
implies that $\varphi(1,y)=0$ for all $0\leq y\leq 1$. 

If there is a horizontal disc, the same proof works by switching the roles of 
$\alpha_1$ and $\alpha_2$.  To finish the proof, we need to take care of
the points that are not on a horizontal or vertical disc. By 
\cite[Proposition 3.2]{FS98}) such points are holomorphic peak points. 
Then we use \cite[Lemma 15]{CSZ18}  to conclude 
that $\phi=\widetilde{T}_{\phi}=0$ on such points. Therefore, 
$\phi=0$ on $b\D$. 
\end{proof}

\begin{proof}[Proof of Theorem \ref{ThmMain1}]
Let us assume that 
$\widetilde{T_{\phi}}(z)\to 0$ for $z\to b\D$. Then 
Corollary \ref{CorZero} implies that 
$\widetilde{T_{\phi_J}}(z)\to 0$ for $z\to b\D$ for all 
$J\in \mathbb{Z}^2$. Then Theorem \ref{Prop2} implies 
that $\phi_J=0$ on $b\D$ for all $J\in \mathbb{Z}^2$. 
Then the Ces\`aro's mean $\Lambda_k(\phi)$ vanishes on 
$b\D$ for all $k$ and, by Lemma \ref{LemSum}, 
$\{\Lambda_k(\phi)\}$ converges to $\phi$ uniformly on $\Dc$ 
as $k\to\infty$. Therefore, $\phi=0$ on $b\D$.

The converse is a result of the facts that $T_{\phi}$ is compact 
whenever $\phi=0$ on $b\D$ and  $k_z\to 0$ weakly for $z\to b\D$ 
(see, \cite[Lemma 1]{RS24}). 
\end{proof}

\begin{proof}[Proof of Corollary \ref{CorToeplitz}]
(i) $\Leftrightarrow$ (ii) is a true statement, in general. Also 
(i) $\Leftrightarrow$ (v) has been proven in \cite{RS24}. 
Next, (ii) $\Rightarrow$ (iii)  because $k_z\to 0$ weakly as 
$z\to b\D$. The statement (iii) $\Rightarrow$ (iv) is true because 
\[|\widetilde{T}_{\phi}(z)|^2\leq \|T_{\phi}k_z\|^2
=\widetilde{T^*_{\phi}T_{\phi}}(z).\]  
Finally, (iv) $\Rightarrow$ (v) follows from Theorem \ref{ThmMain1}. 
\end{proof}

The following proposition is implicit in \cite{RS24}. We state 
it here explicitly as it can be of interest on its own. 
\begin{prop}\label{Prop1}
Let $\D$ be a bounded pseudoconvex  in $\C^n$ with Lipschitz 
boundary  and $\phi\in C(\overline{\D})$. Then the following are 
equivalent:
\begin{itemize}
	\item[(i)] $T_{\phi}$ is compact.
	\item[(ii)] $T_{|\phi|}$ is compact.
	\item[(iii)] $\widetilde{T_{|\phi|}}(z)\to 0$ as $z\to b\D$.
	\item[(iv)] $\phi=0$ on $b\D$.
\end{itemize}
\end{prop}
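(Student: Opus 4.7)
My plan is to derive everything from the main theorem of \cite{RS24}, which for a bounded pseudoconvex domain $\D$ with Lipschitz boundary and $\phi\in C(\Dc)$ characterizes compactness of $T_\phi$ by vanishing of $\phi$ on $b\D$. Applied to $\phi$ it gives (i)$\Leftrightarrow$(iv); applied to $|\phi|\in C(\Dc)$, together with the elementary observation that $|\phi|$ vanishes on $b\D$ exactly when $\phi$ does, it gives (ii)$\Leftrightarrow$(iv). The implication (ii)$\Rightarrow$(iii) is standard: combining the weak null convergence $k_z\to 0$ as $z\to b\D$ from \cite[Lemma 1]{RS24} with compactness of $T_{|\phi|}$ yields $\widetilde{T_{|\phi|}}(z)=\langle T_{|\phi|}k_z,k_z\rangle\to 0$.

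The substantive direction is (iii)$\Rightarrow$(iv), which I would prove by contradiction via a peak-function localization in the spirit of Lemma \ref{LemConv}. Suppose $|\phi(p)|>0$ for some $p\in b\D$, and choose a holomorphic peak set $K\subseteq\Dc$ through $p$, witnessed by $F\in A(\Dc)$ with $F=1$ on $K$ and $|F|<1$ on $\Dc\setminus K$. Pick $z_j\to p$ in $\D$; by separability of $C(\Dc)$ and Alaoglu's theorem, after passing to a subsequence, $|k_{z_j}|^2 dV\to\mu$ weak-$*$ for a probability measure $\mu$ on $\Dc$.

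The key identity is $\widetilde{T_F}(z)=\langle Fk_z,k_z\rangle=F(z)$ for any bounded holomorphic $F$, a direct consequence of the reproducing property of the Bergman kernel. Hence $\int F\,d\mu=\lim F(z_j)=F(p)=1$; since $|F|\leq 1$ on $\Dc$, this forces $F\equiv 1$ $\mu$-almost everywhere, and therefore $\supp(\mu)\subseteq K$. Assumption (iii) now gives $\int|\phi|\,d\mu=\lim\widetilde{T_{|\phi|}}(z_j)=0$, so $|\phi|\equiv 0$ on $\supp(\mu)$.

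The main technical obstacle is the final step: concluding $|\phi(p)|=0$ and reaching the desired contradiction. When $p$ is itself a peak point ($K=\{p\}$), this is immediate. For a general $p\in b\D$, such as points lying on analytic discs in $b\D$, I would exploit the Berezin barycenter identity $\int w_i\,d\mu=\lim z_{j,i}=p_i$ (obtained by applying $\widetilde{T_{w_i}}(z)=z_i$ to the holomorphic coordinate functions), combined with a suitably refined choice of peak function around $p$, in order to ensure that $p\in\supp(\mu)$. Continuity of $|\phi|$ then yields $|\phi(p)|=0$, contradicting the standing assumption and completing the proof.
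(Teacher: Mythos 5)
Your reductions (i)$\Leftrightarrow$(iv) and (ii)$\Leftrightarrow$(iv) via \cite{RS24}, and (ii)$\Rightarrow$(iii) via the weak convergence $k_z\to 0$, are exactly what the paper does. The gap is in (iii)$\Rightarrow$(iv), and it is a genuine one, located precisely at the step you flag as the ``main technical obstacle.'' First, your argument presupposes that every $p\in b\D$ lies in a holomorphic peak set. Proposition \ref{Prop1} is stated for arbitrary bounded pseudoconvex domains with Lipschitz boundary in $\C^n$; on such domains there is no supporting-hyperplane construction, and no reason why a holomorphic $F$ on $\D$, continuous on $\Dc$, peaking on a set containing $p$ should exist. (The peak-set device of Lemma \ref{LemConv} is deployed in the paper only on \emph{convex} domains, where $e^{\ell(z)-\ell(p)}$ for a supporting complex-linear functional $\ell$ does the job.) Second, even granting a peak set $K\ni p$ with $\supp(\mu)\subseteq K$, the conclusion $\int|\phi|\,d\mu=0$ only gives $\phi=0$ on $\supp(\mu)$, and the barycenter identity $\int w_i\,d\mu=p_i$ does not force $p\in\supp(\mu)$: if $K$ contains a segment with midpoint $p$ and endpoints $a,b$, the measure $\tfrac12(\delta_a+\delta_b)$ has barycenter $p$ and support $\{a,b\}\not\ni p$. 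So the contradiction is not reached; the ``suitably refined choice of peak function'' is the missing idea, not a routine refinement.

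The paper avoids all of this. Since $\widetilde{T_{|\phi|}}(z)=\int_{\D}|\phi|\,|k_z|^2\,dV$ is the Berezin transform of the nonnegative continuous function $|\phi|$, it simply invokes the statement established in the last part of the proof of \cite[Theorem 1]{RS24}: for $\psi\in C(\Dc)$ with $\psi\geq 0$ on a bounded pseudoconvex Lipschitz domain, $\widetilde{\psi}=0$ on $b\D$ implies $\psi=0$ on $b\D$. That result rests on Bergman kernel localization on Lipschitz pseudoconvex domains (a lower bound on the mass of $|k_z|^2\,dV$ near $p$ as $z\to p$) rather than on peak functions. If you want a self-contained argument, that is the ingredient you need to supply; the peak-set machinery is neither available in this generality nor, as your own last step shows, sufficient.
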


\begin{proof} 
First we note that \cite[Theorem 1]{RS24} implies that (i) and (iv) 
as well as (ii) and (iv) are equivalent. Also (ii) $\Rightarrow$ (iii) is a result of 
the fact that $k_z\to 0$ weakly as $z\to b\D$ (see \cite[Lemma 1]{RS24}). 
Then to complete the proof, it is enough to show that (iii) $\Rightarrow$ (iv).  
This is included implicitly in the last part of the proof of \cite[Theorem 1]{RS24}, 
in which, the following is proven: Let $\D\subset \C^n$ be a bounded
pseudoconvex domain  with Lipschitz boundary and $\phi\in C(\Dc)$  
such that $\phi\geq 0$. Then  $\widetilde{\phi}=0$ on $b\D $ 
implies that $\phi=0$ on $b\D$.
\end{proof}

\section{Proof of Theorem \ref{ThmMain2} and 
Corollary \ref{CorHankel}}

\begin{lemma}\label{LemHankelSquareQhomo}
Let $\D$ be a bounded convex Reinhardt domain  in $\C^2$ 
and  $\phi\in C(\Dc)$ be a quasi-homogeneous function with 
multi-degree $J\in \mathbb{Z}^2$. Assume that $b\D$ contains 
a non-trivial vertical analytic disc and 
$\widetilde{H_{\phi}^*H_{\phi}}(z)\to 0$ as $z\to b\D$. 
Then $\lambda_{\alpha}\to 0$ as  $\alpha_1\to\infty$ 
for all  $\alpha_2\in\mathbb{N}_0$ where 
$H_{\phi}^*H_{\phi}e_{\alpha}=\lambda_{\alpha}e_{\alpha}$ for  
$\alpha\in \mathbb{N}_0^2$.    	
\end{lemma}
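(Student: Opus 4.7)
The plan is to follow the strategy of Lemma~\ref{LemToepQhomo}, replacing $T_\phi$ by $H^*_\phi H_\phi$. After the same normalization placing $\mathbb{D}^2\subseteq\D\subseteq U=\{|z_1|<1,\ |z_2|<s\}$, I would use the diagonalization \eqref{EqnEigH} to write
\[
\widetilde{H^*_\phi H_\phi}(z)=\frac{1}{K(z,z)}\sum_{\alpha\in\mathbb{N}_0^2}\lambda_\alpha\frac{|z^\alpha|^2}{\|z^\alpha\|^2}=\frac{K_{\mathbb{D}^2}(z,z)}{K(z,z)}(1-|z_1|^2)^2\sum_{\alpha_1\in\mathbb{N}_0}(\alpha_1+1)B(\alpha_1,|z_2|)|z_1|^{2\alpha_1},
\]
where $B(\alpha_1,|z_2|):=\pi^2(1-|z_2|^2)^2\sum_{\alpha_2\in\mathbb{N}_0}|z_2|^{2\alpha_2}\lambda_\alpha/\bigl((\alpha_1+1)\|z^\alpha\|^2\bigr)$. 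Since $K_{\mathbb{D}^2}/K$ stays bounded between $1$ and a constant on slices with $|z_2|<1$ fixed (\cite[Theorem 12.1.29]{JPBook}), the hypothesis $\widetilde{H^*_\phi H_\phi}(z)\to 0$ forces, with $t=|z_1|^2$,
\[
\lim_{t\to 1^-}(1-t)^2\sum_{\alpha_1\in\mathbb{N}_0}(\alpha_1+1)B(\alpha_1,|z_2|)t^{\alpha_1}=0\quad\text{for every fixed }z_2\in\mathbb{D}.
\]

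Next I would invoke Theorem~\ref{T1} with $\lambda=1$ on the rewritten series $(1-t)\sum_{k\geq 0} b_k t^k$, with $b_0=B(0,|z_2|)$ and $b_k=(k+1)B(k,|z_2|)-kB(k-1,|z_2|)$ for $k\geq 1$, to extract $B(\alpha_1,|z_2|)\to 0$. The required Tauberian hypothesis $\sup_k|b_k|<\infty$ reduces to verifying
\[
\sup_{\alpha_1\in\mathbb{N}}\left|\frac{\lambda_{\alpha_1,\alpha_2}}{\|z^{(\alpha_1,\alpha_2)}\|^2}-\frac{\lambda_{\alpha_1-1,\alpha_2}}{\|z^{(\alpha_1-1,\alpha_2)}\|^2}\right|<\infty.
\]
Splitting $\lambda_\alpha=\lambda''_\alpha-|\lambda'_\alpha|^2$ (the second term absent when $\alpha+J\notin\mathbb{N}_0^2$), the $\lambda''_\alpha$ piece has the exact structure of the Toeplitz calculation with $J=0$ and $\varphi$ replaced by $|\varphi|^2$, so the norm-difference estimate analogous to \eqref{EqnNormDiff}, combined with $(\alpha_1+1)\|z^\alpha\|^2\to\pi^2/(\alpha_2+1)$, yields the needed $O(1)$ bound. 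For the $|\lambda'_\alpha|^2$ piece I would factor $|\lambda'_\alpha|^2/\|z^\alpha\|^2=\mu_\alpha^2\|z^{\alpha+J}\|^2$ with $\mu_\alpha:=\lambda'_\alpha/(\|z^\alpha\|\|z^{\alpha+J}\|)$ bounded by $\|\varphi\|_\infty$, so each term is $O(\alpha_1^{-1})$; its consecutive difference follows from \eqref{EqnLambda'} combined with elementary telescoping of the square. This is the main obstacle of the proof, since the quadratic contribution adds a layer on top of the already nontrivial Toeplitz estimates.

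With $B(\alpha_1,|z_2|)\to 0$ established for every $z_2\in\mathbb{D}$, the argument closes exactly as in Lemma~\ref{LemToepQhomo}: complexify to $g_{\alpha_1}(\xi)=\sum_{\alpha_2}\xi^{2\alpha_2}\lambda_\alpha/\bigl((\alpha_1+1)\|z^\alpha\|^2\bigr)$; use an analogue of \eqref{EqnBdd} with $\lambda_\alpha\leq\|H_\phi\|^2$ in place of $|\lambda'_\alpha|\leq\|T_\phi\|$ to obtain a uniform bound for $\{g_{\alpha_1}\}$ on $\{|\xi|\leq 3/4\}$; apply Montel's theorem and the identity principle (using $g_{\alpha_1}\to 0$ on $(-1,1)$) to upgrade to uniform convergence to zero on $\overline{\mathbb{D}}_{1/2}$; and finish with the Cauchy estimate $g_{\alpha_1}^{(2\alpha_2)}(0)=(2\alpha_2)!\,\lambda_\alpha/\bigl((\alpha_1+1)\|z^\alpha\|^2\bigr)\to 0$ combined with the normalization $(\alpha_1+1)\|z^\alpha\|^2\to\pi^2/(\alpha_2+1)$, concluding $\lambda_\alpha\to 0$ as $\alpha_1\to\infty$ for every fixed $\alpha_2\in\mathbb{N}_0$.
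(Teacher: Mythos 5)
Your skeleton is exactly the paper's: the same normalization $\mathbb{D}^2\subseteq\D\subseteq U$, the same slice function (the paper's $A(\alpha_1,|z_2|)$ is your $B$), the same application of Theorem~\ref{T1} with $\lambda=1$ after rewriting $(1-t)^2\sum(\alpha_1+1)A(\alpha_1,|z_2|)t^{\alpha_1}$ as $(1-t)\sum b_kt^k$, and the same Montel/identity-principle/Cauchy-estimate finish. The one step where your argument as written breaks is the $|\lambda'_\alpha|^2$ contribution to \eqref{EqnBdd1Prod}. The quantity $\mu_\alpha=\lambda'_\alpha/(\|z^\alpha\|\|z^{\alpha+J}\|)$ is \emph{not} bounded by $\|\varphi\|_{L^\infty}$: the normalizing factors are already built into $\lambda'_\alpha$ through $e_\alpha=z^\alpha/\|z^\alpha\|$, so Cauchy--Schwarz gives $|\lambda'_\alpha|\leq\|\varphi\|_{L^\infty}$ for $\lambda'_\alpha$ itself, and dividing once more by $\|z^\alpha\|\|z^{\alpha+J}\|\approx\pi^2/\big(\alpha_1\sqrt{(\alpha_2+1)(\alpha_2+j_2+1)}\big)$ (by \eqref{EqnRatio}) yields $\mu_\alpha=O(\alpha_1)$ for fixed $\alpha_2$. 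Consequently $|\lambda'_\alpha|^2/\|z^\alpha\|^2$ grows like $\alpha_1$, not $O(\alpha_1^{-1})$; if the terms really were $O(\alpha_1^{-1})$ there would be no cancellation to establish and the Tauberian machinery would be pointless. Since this is precisely the step you flag as ``the main obstacle,'' the proposal has a genuine gap there.

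The gap is repairable along the lines you indicate, but only with the correct growth rates. Writing $a=\mu_{\alpha_1,\alpha_2}$, $b=\mu_{\alpha_1-1,\alpha_2}$, $N_1=\|z^{\alpha+J}\|^2$, $N_2=\|z^{(\alpha_1-1,\alpha_2)+J}\|^2$, the telescoping
\begin{equation*}
a^2N_1-b^2N_2=(a-b)(a+b)N_1+b^2(N_1-N_2)
\end{equation*}
is $O(1)$ because $|a-b|=O(1)$ by \eqref{EqnLambda'}, $a+b=O(\alpha_1)$, $N_1=O(\alpha_1^{-1})$, $b^2=O(\alpha_1^2)$, and $N_2-N_1=O(\alpha_1^{-2})$ (its integrand carries the factor $x^{2\alpha_1+2j_1-1}-x^{2\alpha_1+2j_1+1}$). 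Done correctly, this is arguably cleaner than the paper's route, which instead splits the difference as $|\lambda'_{\alpha_1,\alpha_2}|(\cdots)+(\cdots)|\lambda'_{\alpha_1-1,\alpha_2}|$ and must prove further norm-difference asymptotics such as $\|z^{(\alpha_1-1,\alpha_2)}\|^6\|z^{(\alpha_1-1,\alpha_2)+J}\|^2-\|z^{\alpha}\|^6\|z^{\alpha+J}\|^2\approx\alpha_1^{-5}$, in the spirit of \eqref{EqnNormDiff} --- considerably more work than your one-line appeal to \eqref{EqnLambda'} acknowledges. The remainder of your outline (the $\lambda''$ piece as the Toeplitz computation with $|\varphi|^2$ and $J=0$, the uniform bound on $B(\alpha_1,\cdot)$, and the concluding complexification) matches the paper and is fine.
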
 

\begin{proof} 
This first part of the proof is very similar to the proof of 
Lemma \ref{LemToepQhomo}, without loss of generality, we assume that 
\[\mathbb{D}^2=\{z\in \C^2: |z_1|<1, |z_2|< 1\}  
\text{ and } U=\{z\in \C^2: |z_1|< 1, |z_2|< s\}\]
such that $\mathbb{D}^2\subset \D\subset U$ (see Figure \ref{Fig}). 
We note that $\lambda_{\alpha}\geq 0$ and \eqref{EqnEigH} implies that 
\[	\lambda_{\alpha}=\begin{cases}
\lambda''_{\alpha} \text{ for } 
	\alpha+J\not\in \mathbb{N}_0^2\\ 
 \lambda''_{\alpha}-|\lambda'_{\alpha}|^2 \text{  for } 
	\alpha+J\in \mathbb{N}_0^2
\end{cases}  \]
for $\alpha\in \mathbb{N}_0^2$. 
One can calculate the Berezin transform of $H_{\phi}^*H_{\phi}$ 
as follows  
\begin{align*}
\widetilde{H_{\phi}^*H_{\phi}}(z)
&
	= \frac{1}{K(z,z)} \langle H_{\phi}^*H_{\phi} K(\cdot,z), K(\cdot,z) \rangle \\
& =\frac{1}{K(z,z)} \sum_{\alpha,\beta \in \mathbb{N}^2_0} 
	\overline{e_{\alpha}(z)} e_{\beta}(z) \langle H_{\phi}^*H_{\phi}e_{\alpha}, 
	e_{\beta}\rangle\\
& =\frac{K_{\mathbb{D}^2}(z,z)}{K(z,z)} \frac{1}{K_{\mathbb{D}^2}(z,z)} 
	\sum_{\alpha\in \mathbb{N}^2_0}
	\frac{|z^{\alpha}|^2}{\|z^{\alpha}\|^2}\lambda_{\alpha}. 
\end{align*}
Here $\frac{K_{\mathbb{D}^2}(z,z)}{K(z,z)}\geq 1$ since 
$\mathbb{D}^2\subset \D$ (see \cite[p. 413]{JPBook}). 
Then $\widetilde{H_{\phi}^*H_{\phi}}(z)\to 0$ as $z\to b\D$ implies that 
\[ \frac{1}{K_{\mathbb{D}^2}(z,z)}\sum_{\alpha\in \mathbb{N}^2_0}
\frac{|z^{\alpha}|^2}{\|z^{\alpha}\|^2}\lambda_{\alpha} \to 0  
\text{ as } z\to b\D.\]
We write 
\begin{align*}
\frac{1}{K_{\mathbb{D}^2}(z,z)}\sum_{\alpha\in \mathbb{N}^2_0}
\frac{|z^{\alpha}|^2}{\|z^{\alpha}\|^2}\lambda_{\alpha} 
=\left(1-|z_1|^2\right)^2 
	\sum_{\alpha_1\in \mathbb{N}_0}(\alpha_1+1)
	 A(\alpha_1, |z_2|)|z_1|^{2\alpha_1}
\end{align*}
where
\[ A(\alpha_1,|z_2|)
=\pi^2 (1-|z_2|^2)^2
\sum_{\alpha_2\in \mathbb{N}_0} \frac{
|z_2|^{2\alpha_2}\lambda_{\alpha}}{(\alpha_1+1)\|z^{\alpha}\|^2}.\]
Let us denote $t=|z_1|^2$. Then 
\begin{align*}
\lim_{t\to 1^{-}}(1-t)^2
\sum_{\alpha_1\in  \mathbb{N}_0} (\alpha_1+1)A(\alpha_1,|z_2|)t^{\alpha_1}=0.
\end{align*}
Computations similar to \eqref{EqnBdd} implies that there 
exists $C>0$ such that  $0\leq A(\alpha_1, |z_2|)<C$ for all 
$\alpha_1\in \mathbb{N}_0$ and $|z_2|<1$.  
We note that 
\begin{align*}
(1-t)^2
&\sum_{\alpha_1\in  \mathbb{N}_0} (\alpha_1+1)A(\alpha_1,|z_2|)t^{\alpha_1} 
	=(1-t)A(0,|z_2|)\\
&+ (1-t)\left(\sum_{\alpha_1\in  \mathbb{N}}  \left((\alpha_1+1)A(\alpha_1,|z_2|)
	- \alpha_1A(\alpha_1-1,|z_2|)\right)t^{\alpha_1}\right).  
\end{align*} 
Next we want to show that 
\begin{align}\label{EqnBdd1Prod} 
\sup\{|(\alpha_1+1)A(\alpha_1,|z_2|)- \alpha_1A(\alpha_1-1,|z_2|)|:
\alpha_1\in  \mathbb{N}\}<\infty
\end{align}  
for any $z_2\in \mathbb{D}$. We note that 
\begin{align*}
&|(\alpha_1+1)A(\alpha_1,|z_2|)- \alpha_1A(\alpha_1-1,|z_2|)| \\
&\leq
	\pi^2 (1-|z_2|^2)^2 
	\sum_{\alpha_2\in \mathbb{N}_0} |z_2|^{2\alpha_2} 
	\left|\frac{\lambda_{\alpha_1,\alpha_2}}{\|z^{\alpha}\|^2}
	-\frac{\lambda_{\alpha_1-1,\alpha_2}}{\|z^{(\alpha_1-1,\alpha_2)}\|^2}\right|. 
\end{align*} 
Since $\lambda_{\alpha}=\lambda''_{\alpha}-|\lambda'_{\alpha}|^2$ 
we get 
\[\left|\frac{\lambda_{\alpha_1,\alpha_2}}{\|z^{\alpha}\|^2}
-\frac{\lambda_{\alpha_1-1,\alpha_2}}{\|z^{(\alpha_1-1,\alpha_2)}\|^2}\right|
\leq 
\left|\frac{\lambda''_{\alpha_1,\alpha_2}}{\|z^{\alpha}\|^2}
-\frac{\lambda''_{\alpha_1-1,\alpha_2}}{\|z^{(\alpha_1-1,\alpha_2)}\|^2}\right|
+\left|\frac{|\lambda'_{\alpha_1,\alpha_2}|^2}{\|z^{\alpha}\|^2}
-\frac{|\lambda'_{\alpha_1-1,\alpha_2}|^2}{\|z^{(\alpha_1-1,\alpha_2)}\|^2}\right|.\]
Then to prove \eqref{EqnBdd1Prod} it is enough to show that 
\begin{align}
\label{EqnSup1}&\sup\left\{\left|\frac{\lambda''_{\alpha_1,\alpha_2}}{
	\|z^{\alpha}\|^2}-\frac{\lambda''_{\alpha_1-1,\alpha_2}}{
\|z^{(\alpha_1-1,\alpha_2)}\|^2}\right|:\alpha_1\in  \mathbb{N}\right\}<\infty,\\
\label{EqnSup2}&\sup\left\{\left|\frac{|\lambda'_{\alpha_1,\alpha_2}|^2}{
	\|z^{\alpha}\|^2}-\frac{|\lambda'_{\alpha_1-1,\alpha_2}|^2}{
	\|z^{(\alpha_1-1,\alpha_2)}\|^2}\right|:\alpha_1\in  \mathbb{N}\right\}<\infty.
\end{align}   

Below we use the inequality$y<\rho_1(x)$ to 
define  $|\D|=\{(|z_1|,|z_2|)\in\mathbb{R}^2:z\in \D\}$. 
\begin{align*}
\lambda''_{\alpha} 
=&\, \frac{1}{\|z^{\alpha}\|^2}\int|\varphi(z)|^2|z^{\alpha}|^2dV(z) \\
=&\,\frac{4\pi^2}{\|z^{\alpha}\|^2} \int_0^1x^{2\alpha_1+1} 
	\int_0^{\rho_1(x)}|\varphi(x,y)|^2y^{2\alpha_2+1}dydx\\
=&\,\frac{4\pi^2}{\|z^{\alpha}\|^2}
	\int_0^1x^{2\alpha_1+1} \psi''(x,\alpha_2)dx
\end{align*}
where  
 $\psi''(x,\alpha_2)=\int_0^{\rho_1(x)}|\varphi(x,y)|^2y^{2\alpha_2+1}dy$. 
Hence,  
\begin{align*}
&\frac{\lambda''_{\alpha_1,\alpha_2}}{\|z^{\alpha}\|^2}
	-\frac{\lambda''_{\alpha_1-1,\alpha_2}}{\|z^{(\alpha_1-1,\alpha_2)}\|^2} \\
&=\frac{4\pi^2}{\|z^{\alpha}\|^4}
	\int_0^1x^{2\alpha_1+1}\psi''(x,\alpha_2)dx
	-\frac{4\pi^2}{\|z^{(\alpha_1-1,\alpha_2)}\|^4}
	\int_0^1x^{2\alpha_1-1}\psi''(x,\alpha_2)dx\\
&=\frac{4\pi^2}{\|z^{\alpha}\|^4}
\int_0^1(x^{2\alpha_1+1}-x^{2\alpha_1-1})\psi''(x,\alpha_2)dx\\
&+\left(\frac{4\pi^2}{\|z^{\alpha}\|^4}
	-\frac{4\pi^2}{\|z^{(\alpha_1-1,\alpha_2)}\|^4}\right) 
\int_0^1x^{2\alpha_1-1}\psi''(x,\alpha_2)dx.
\end{align*} 
The first term on the right hand side of the inequality is bounded 
because  integral is dominated by $\alpha_1^{-2}$ and 
$\|z^{\alpha}\|^4\approx \alpha_1^{-2}$. 
The second integral on the right hand side is dominated by $\alpha_1^{-1}$.
On the other hand, one can use \eqref{EqnRatio} to show that 
\begin{align*}
&\frac{1}{\|z^{\alpha}\|^4}-\frac{1}{\|z^{(\alpha_1-1,\alpha_2)}\|^4} \\
=& \frac{1}{\|z^{\alpha}\|^4}- \frac{(\alpha_1+1)^2\pi^4}{(\alpha_2+1)^2}
	-\frac{1}{\|z^{(\alpha_1-1,\alpha_2)}\|^4}
	+\frac{\alpha_1^2\pi^4}{(\alpha_2+1)^2}
	+\frac{(2\alpha_1+1)\pi^4}{(\alpha_2+1)^2}
\approx\,  \alpha_1. 
\end{align*} 	
Therefore we verified \eqref{EqnSup1}. 
Next we will verify \eqref{EqnSup2}. 
\begin{align} 
\nonumber &\frac{|\lambda'_{\alpha_1,\alpha_2}|^2}{
	\|z^{\alpha}\|^2}-\frac{|\lambda'_{\alpha_1-1,\alpha_2}|^2}{
	\|z^{(\alpha_1-1,\alpha_2)}\|^2}\\
\nonumber & =	\frac{|\lambda'_{\alpha_1,\alpha_2}|^2}{\|z^{\alpha}\|^2}
	-\frac{|\lambda'_{\alpha_1,\alpha_2}||\lambda'_{\alpha_1-1,\alpha_2}|}{
	\|z^{\alpha}\|^2}+
	\frac{|\lambda'_{\alpha_1,\alpha_2}||\lambda'_{\alpha_1-1,\alpha_2}|}{
		\|z^{\alpha}\|^2}
	-\frac{|\lambda'_{\alpha_1-1,\alpha_2}|^2}{
		\|z^{(\alpha_1-1,\alpha_2)}\|^2}\\
\label{EqnH1}& =|\lambda'_{\alpha_1,\alpha_2}| 	\left(\frac{|\lambda'_{\alpha_1,\alpha_2}|
	-|\lambda'_{\alpha_1-1,\alpha_2}|}{\|z^{\alpha}\|^2}\right)
+\left(\frac{|\lambda'_{\alpha_1,\alpha_2}|}{
	\|z^{\alpha}\|^2}
-\frac{|\lambda'_{\alpha_1-1,\alpha_2}|}{
	\|z^{(\alpha_1-1,\alpha_2)}\|^2}\right)|\lambda'_{\alpha_1-1,\alpha_2}|. 	
\end{align} 
We note that $|\lambda'_{\alpha}|\leq \|\phi\|_{L^{\infty}(\D)}<\infty$ and 
$\psi'$ is bounded.  Let us focus on the first term on the right hand 
side in \eqref{EqnH1}.  
\begin{align*}
&\left|\frac{|\lambda'_{\alpha_1,\alpha_2}|
	-|\lambda'_{\alpha_1-1,\alpha_2}|}{\|z^{\alpha}\|^2} \right| \\
&	\leq\left|\frac{\lambda'_{\alpha_1,\alpha_2}
		-\lambda'_{\alpha_1-1,\alpha_2}}{\|z^{\alpha}\|^2} \right| \\
&=\left|\frac{4\pi^2}{\|z^{\alpha}\|^3\|z^{\alpha+J}\|}
\int_0^1x^{2\alpha_1+j_1+1}\psi'(x,\alpha_2)dx \right.\\
&\left. - \frac{4\pi^2}{\|z^{\alpha}\|^2\|z^{(\alpha_1-1,\alpha_2)}\|
	\|z^{(\alpha_1-1,\alpha_2)+J}\|}
\int_0^1x^{2\alpha_1+j_1-1}\psi'(x,\alpha_2)dx \right|  \\
&\leq \frac{4\pi^2}{\|z^{\alpha}\|^3\|z^{\alpha+J}\|}
\int_0^1|x^{2\alpha_1+j_1+1}-x^{2\alpha_1+j_1-1}| |\psi'(x,\alpha_2)|dx\\
&+\left|  \frac{4\pi^2}{\|z^{\alpha}\|^3\|z^{\alpha+J}\|}
-  \frac{4\pi^2}{\|z^{\alpha}\|^2\|z^{(\alpha_1-1,\alpha_2)}\|
	\|z^{(\alpha_1-1,\alpha_2)+J}\|}\right|
	\int_0^1x^{2\alpha_1+j_1-1}|\psi'(x,\alpha_2)|dx \\
	&\lesssim \alpha_1^2\int_0^1(x^{2\alpha_1+j_1-1}-x^{2\alpha_1+j_1+1})dx\\
&	+ \frac{1}{\alpha_1\|z^{\alpha}\|^2}\left|  \frac{1}{\|z^{\alpha}\|\|z^{\alpha+J}\|}
	-  \frac{1}{\|z^{(\alpha_1-1,\alpha_2)}\| 	\|z^{(\alpha_1-1,\alpha_2)+J}\|}\right|.
\end{align*} 
The first term on the right hand side above is bounded 
as $\int_0^1(x^{2\alpha_1+j_1-1}-x^{2\alpha_1+j_1+1})dx
\approx \alpha_1^{-2}$. The second term is bounded because  
\begin{align*}
	& \|z^{(\alpha_1-1,\alpha_2)}\|\|z^{(\alpha_1-1,\alpha_2)+J}\|
	-  \|z^{\alpha}\|\|z^{\alpha+J}\|\\
	& =\frac{\|z^{(\alpha_1-1,\alpha_2)}\|^2\|z^{(\alpha_1-1,\alpha_2)+J}\|^2
	-\|z^{\alpha}\|^2\|z^{\alpha+J}\|^2}{\|z^{(\alpha_1-1,\alpha_2)}\|
	\|z^{(\alpha_1-1,\alpha_2)+J}\|+ \|z^{\alpha}\|\|z^{\alpha+J}\|}\\
\text{(by \eqref{EqnNormDiff})}	&\approx \frac{\alpha_1^{-3}}{\alpha^{-1}_1}=\alpha_1^{-2}.
\end{align*}  
Hence the first term on the right hand side in \eqref{EqnH1} is bounded. 
To prove the second term we use similar inequalities below. 
\begin{align*} 
&\left|\frac{|\lambda'_{\alpha_1,\alpha_2}|}{\|z^{\alpha}\|^2}
-\frac{|\lambda'_{\alpha_1-1,\alpha_2}|}{\|z^{(\alpha_1-1,\alpha_2)}\|^2}\right|\\
	&\lesssim \frac{1}{\|z^{\alpha}\|^3\|z^{\alpha+J}\|}
	\int_0^1|x^{2\alpha_1+j_1+1}-x^{2\alpha_1+j_1-1}| |\psi'(x,\alpha_2)|dx\\
	&+\left|  \frac{1}{\|z^{\alpha}\|^3\|z^{\alpha+J}\|}
	-  \frac{1}{\|z^{(\alpha_1-1,\alpha_2)}\|^3
		\|z^{(\alpha_1-1,\alpha_2)+J}\|}\right|
	\int_0^1x^{2\alpha_1+j_1-1}|\psi'(x,\alpha_2)|dx \\
	&\lesssim \alpha_1^2\int_0^1(x^{2\alpha_1+j_1-1}-x^{2\alpha_1+j_1+1})dx\\
	&	+ \frac{1}{\alpha_1}\left|  \frac{1}{\|z^{\alpha}\|^3\|z^{\alpha+J}\|}
	-  \frac{1}{\|z^{(\alpha_1-1,\alpha_2)}\|^3 	\|z^{(\alpha_1-1,\alpha_2)+J}\|}\right|.
\end{align*} 
Again the first term on the right hand side above is bounded as 
the integral is dominated by $\alpha_1^{-2}$. 
The second term is bounded if and only if 
\begin{align*}
 \|z^{(\alpha_1-1,\alpha_2)}\|^3\|z^{(\alpha_1-1,\alpha_2)+J}\|
-  \|z^{\alpha}\|^3\|z^{\alpha+J}\|
\approx \alpha^{-3}_1.
\end{align*} 
However, 
\begin{align*}
&\|z^{(\alpha_1-1,\alpha_2)}\|^3\|z^{(\alpha_1-1,\alpha_2)+J}\|
-  \|z^{\alpha}\|^3\|z^{\alpha+J}\|\\
&=\frac{\|z^{(\alpha_1-1,\alpha_2)}\|^6\|z^{(\alpha_1-1,\alpha_2)+J}\|^2
- \|z^{\alpha}\|^6\|z^{\alpha+J}\|^2}{\|z^{(\alpha_1-1,\alpha_2)}\|^3
\|z^{(\alpha_1-1,\alpha_2)+J}\|+ \|z^{\alpha}\|^3\|z^{\alpha+J}\|}.
\end{align*} 
Since $\|z^{(\alpha_1-1,\alpha_2)}\|^3
\|z^{(\alpha_1-1,\alpha_2)+J}\|+ \|z^{\alpha}\|^3\|z^{\alpha+J}\|\approx \alpha_1^{-2}$, 
it is enough to show that 
\begin{align*}
\|z^{(\alpha_1-1,\alpha_2)}\|^6\|z^{(\alpha_1-1,\alpha_2)+J}\|^2
-  \|z^{\alpha}\|^6\|z^{\alpha+J}\|^2
\approx \alpha^{-5}_1.
\end{align*} 
To verify the estimate above we write
\begin{align*} 
& \|z^{(\alpha_1-1,\alpha_2)}\|^6\|z^{(\alpha_1-1,\alpha_2)+J}\|^2
-  \|z^{\alpha}\|^6\|z^{\alpha+J}\|^2\\
	\approx &\left(\int_0^1x^{2\alpha_1-1}
(\rho_1(x))^{2\alpha_2+2}dx\right)^3 \left(\int_0^1x^{2\alpha_1+2j_1-1}
(\rho_1(x))^{2\alpha_2+2j_2+2}dx\right) \\
& -\left(\int_0^1x^{2\alpha_1+1}
(\rho_1(x))^{2\alpha_2+2}dx\right)^3\left(\int_0^1x^{2\alpha_1+2j_1+1}
(\rho_1(x))^{2\alpha_2+2j_2+2}dx\right) .
\end{align*}
Let us denote 
\begin{align*} 
X_1=&\int_0^1(x^{2\alpha_1-1}-x^{2\alpha_1+1})
(\rho_1(x))^{2\alpha_2+2}dx \approx \alpha_1^{-2},\\
X_2=&\int_0^1x^{2\alpha_1+1} (\rho_1(x))^{2\alpha_2+2}dx
\approx \alpha_1^{-1},\\
X_3=&\int_0^1x^{2\alpha_1+2j_1-1} (\rho_1(x))^{2\alpha_2+2j_2+2}dx
\approx \alpha_1^{-1},\\
X_4=&\int_0^1(x^{2\alpha_1+2j_1-1}-x^{2\alpha_1+2j_{1}+1})
(\rho_1(x))^{2\alpha_2+2j_2+2}dx \approx \alpha_1^{-2}.
\end{align*}  
Then 
\begin{align*}
&\|z^{(\alpha_1-1,\alpha_2)}\|^6\|z^{(\alpha_1-1,\alpha_2)+J}\|^2
	-  \|z^{\alpha}\|^6\|z^{\alpha+J}\|^2\\
&=(X_1+X_2)^3X_3-X^3_2(X_3-X_4)\\
&=X_1^3X_3+3X_1^2X_2X_3+3X_1X_2^2X_3+X_2^3X_4\\
&\approx \alpha_1^{-7}+\alpha_1^{-6}+\alpha_1^{-5}+\alpha_1^{-5}
\approx \alpha_1^{-5}.
\end{align*}
Hence, we verified  that  \eqref{EqnBdd1Prod} 
we finish the proof the same as in the proof of 
Lemma \ref{LemToepQhomo}.  
\end{proof}

\begin{prop} \label{Prop3} 
Let $\D$ be a bounded convex Reinhardt domain  in $\C^2$ 
and  $\phi\in C(\Dc)$ be a quasi-homogeneous function with 
multi-degree $J\in \mathbb{Z}^2$. Then 
$\widetilde{H^*_{\phi}H_{\phi}}(z)=\|H_{\phi}k_z\|^2\to 0$
as $z\to b\D$ if and only if $\phi\circ f$ is holomorphic for 
any holomorphic $f:\mathbb{D}\to b\D$.
\end{prop}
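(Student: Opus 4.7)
The plan is to adapt the asymptotic strategy developed in the proof of Theorem \ref{Prop2} for the Toeplitz case, with the key new feature that the vanishing of the Berezin transform here yields a \emph{Cauchy--Schwarz equality} rather than a vanishing moment condition; for the converse, I would invoke the known characterizations of compactness for Hankel operators on convex Reinhardt domains in $\C^2$.

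For the forward direction, by \cite[Lemma 1]{ClS18} every non-trivial analytic disc in $b\D$ is either horizontal or vertical, so it suffices to treat both cases and, as in the proofs above, I normalize so that $\mathbb{D}^2 \subset \D \subset U$ with a vertical disc at $|z_1|=1$. Lemma \ref{LemHankelSquareQhomo} gives $\lambda_\alpha \to 0$ as $\alpha_1 \to \infty$ for each fixed $\alpha_2$, where $H^*_\phi H_\phi e_\alpha = \lambda_\alpha e_\alpha$. Using \eqref{EqnRatio} together with the truncation idea from \eqref{Eqn5} (resting on the fact that $\rho_1(x)<1$ for $x<1$ while $\rho_1(1)=1$) applied to the integral representations of $\lambda'_\alpha$ and $\lambda''_\alpha$, I would establish
\begin{align*}
\lim_{\alpha_1 \to \infty} \lambda'_\alpha &= 2\sqrt{(\alpha_2+1)(\alpha_2+j_2+1)} \int_0^1 \varphi(1,y)\, y^{2\alpha_2+j_2+1}\, dy,\\
\lim_{\alpha_1 \to \infty} \lambda''_\alpha &= 2(\alpha_2+1) \int_0^1 |\varphi(1,y)|^2\, y^{2\alpha_2+1}\, dy,
\end{align*}
whenever the corresponding indices lie in $\mathbb{N}_0^2$.

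Passing to the limit in $\lambda_\alpha = \lambda''_\alpha - |\lambda'_\alpha|^2$ then yields
\[
\int_0^1 |\varphi(1,y)|^2 y^{2\alpha_2+1}\, dy = 2(\alpha_2+j_2+1) \Bigl|\int_0^1 \varphi(1,y) y^{2\alpha_2+j_2+1}\, dy\Bigr|^2
\]
for every $\alpha_2$ with $\alpha_2 + j_2 \geq 0$. Since the Cauchy--Schwarz inequality applied to the factors $\varphi(1,y) y^{\alpha_2+1/2}$ and $y^{\alpha_2+j_2+1/2}$, together with $\int_0^1 y^{2\alpha_2+2j_2+1} dy = 1/(2\alpha_2+2j_2+2)$, gives the reverse inequality with equality if and only if $\varphi(1,y) = c\, y^{j_2}$ a.e., the identity above is precisely the equality case and forces $\varphi(1, y) = c\, y^{j_2}$ a.e.\ on $(0,1)$ for some $c \in \C$. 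When $j_2 < 0$, the indices $\alpha_2 < |j_2|$ give $\alpha + J \notin \mathbb{N}_0^2$, so $\lambda_\alpha = \lambda''_\alpha \to 0$ directly forces $\varphi(1,y) = 0$ a.e. In either case $\phi(z_0, z_2) = \varphi(1, |z_2|) e^{i(j_1 \arg z_0 + j_2 \arg z_2)}$ is a holomorphic function of $z_2$ on the vertical disc; horizontal discs are handled symmetrically by swapping $\alpha_1 \leftrightarrow \alpha_2$ in Lemma \ref{LemHankelSquareQhomo}. Points of $b\D$ lying on no analytic disc are holomorphic peak points by \cite[Proposition 3.2]{FS98}, and the image of any holomorphic $f:\mathbb{D}\to b\D$ containing such a peak point must be constant by the maximum principle, so $\phi \circ f$ is holomorphic for every holomorphic $f:\mathbb{D}\to b\D$.

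For the converse, the hypothesis that $\phi \circ f$ is holomorphic on every holomorphic disc in $b\D$ is the known sufficient condition for compactness of $H_\phi$ with $\phi \in C(\Dc)$ on bounded convex Reinhardt domains in $\C^2$ (see, e.g., \cite{ClS18, CelikSahutogluStraube23}), so $H_\phi$ is compact. Since $k_z \to 0$ weakly as $z \to b\D$ by \cite[Lemma 1]{RS24}, compactness then gives $\|H_\phi k_z\|^2 = \widetilde{H^*_\phi H_\phi}(z) \to 0$. The technical heart of the argument is the asymptotic analysis: rigorously identifying the leading-order behavior of $\lambda'_\alpha$ and $\lambda''_\alpha$ as $\alpha_1 \to \infty$ and verifying that the remainder terms are negligible. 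Once these asymptotics are in hand, the Cauchy--Schwarz equality step closes the forward direction immediately.
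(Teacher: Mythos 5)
Your proposal is correct and follows essentially the same route as the paper: normalize to a vertical disc, use Lemma \ref{LemHankelSquareQhomo} to get $\lambda_{\alpha}\to 0$, compute the $\alpha_1\to\infty$ limits of the eigenvalue integrals (the paper does this via \cite[Lemma 3.3]{Le10H}, which is exactly the ratio-limit fact you sketch), and recognize the resulting identity as the equality case of Cauchy--Schwarz forcing $\varphi(1,y)=ay^{j_2}$, with the $\alpha+J\notin\mathbb{N}_0^2$ case giving $\varphi(1,\cdot)=0$; the converse is likewise deferred to \cite{ClS18}. The only cosmetic differences are that you take the limits of $\lambda'_{\alpha}$ and $\lambda''_{\alpha}$ separately rather than of $\lambda_{\alpha}$ as a whole, and you explicitly dispose of boundary points not lying on any disc, which the paper leaves implicit here.
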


\begin{proof} 
We will prove the forward direction only as the converse is a consequence 
of  \cite{ClS18}. So we assume that $\widetilde{H^*_{\phi}H_{\phi}}(z)=\|H_{\phi}k_z\|^2\to 0$
as $z\to b\D$. 
Since $\phi$ is a quasi-homogeneous function with multi-degree 
$J\in \mathbb{Z}^2$, we have 
$H^*_{\phi}H_{\phi}e_{\alpha}=\lambda_{\alpha}e_{\alpha}$ 
for $\alpha\in\mathbb{N}_0^2$. 

Without loss of generality, let us assume that $b\D$ has a vertical discs 
at $z_0=1$ where $|z_0|=1$ and below we will use the inequality 
$y<\rho_1(x)$ to define $|\D|=\{(|z_1|,|z_2|)\in\mathbb{R}^2:z\in \D\}$. 

Lemma \ref{LemHankelSquareQhomo} implies that 
$\lambda_{\alpha}\to 0$ as  $\alpha_1\to\infty$ for all $\alpha_2$. 
First we deal with the case $\alpha_2+j_2\in \mathbb{N}_0$ and 
$\alpha_1$ large enough, we have 
\begin{align*}
\lambda_{\alpha}=&\int_{\D} |\varphi(z)|^2 |e_{\alpha}(z)|^{2}dV(z) 
	- \left|\int_{\D} \varphi(z) |e_{\alpha}(z)|| e_{\alpha+J}(z)|dV(z)\right|^2\\
=&\,\frac{1}{\|z^{\alpha}\|^{2}} \int_{\D} |\varphi(z)|^{2} |z^{\alpha}|^{2}dV(z)
	-  \frac{1}{\|z^{\alpha}\|^2 \|z^{\alpha+J}\|^{2}} 
	\left|\int_{\D} \varphi(z) |z^{\alpha}|| z^{\alpha+J}|dV(z)\right|^2\\
=& \, \frac{\int_0^1\int_0^{\rho_1(x)}|\varphi(x,y)|^2x^{2\alpha_1+1}
	y^{2\alpha_2+1}dydx}{\int_0^1\int_0^{\rho_1(x)}x^{2\alpha_1+1}
	y^{2\alpha_2+1}dydx}\\
&-\frac{\left|\int_0^1\int_0^{\rho_1(x)} \varphi(x,y)x^{2\alpha_1+j_1+1}
	y^{2\alpha_2+j_2+1}dydx\right|^2}{\int_0^1\int_0^{\rho_1(x)}
	x^{2\alpha_1+1}y^{2\alpha_2+1}dydx\int_0^1\int_0^{\rho_1(x)}
	x^{2\alpha_1+2j_1+1}y^{2\alpha_2+2j_2+1}dydx}\\
=&\, \frac{(2\alpha_2+2)\int_0^1F(x)x^{2\alpha_1+1}dx}{
	\int_0^1(\rho_1(x))^{2\alpha_2+2}x^{2\alpha_1+1}dx}\\
&-\frac{(2\alpha_2+2)(2\alpha_2+2j_2+2)\left|\int_0^1G(x)x^{2\alpha_1+j_1+1}dx\right|^2}{
	\int_0^1(\rho_1(x))^{2\alpha_2+2}x^{2\alpha_1+1}dx
	\int_0^1(\rho_1(x))^{2\alpha_2+2j_2+2}x^{2\alpha_1+2j_2+1}dx} 
\end{align*}
where  
\begin{align*}
F(x)&= \int_0^{\rho_1(x)}|\varphi(x,y)|^2y^{2\alpha_2+1}dy,\\
G(x)&= \int_0^{\rho_1(x)}\varphi(x,y)y^{2\alpha_2+j_2+1}dy.
\end{align*} 
Next using \cite[Lemma 3.3]{Le10H} we have the following limits
\begin{align*}
\frac{\int_0^1F(x)x^{2\alpha_1+1}dx}{
	\int_0^1(\rho_1(x))^{2\alpha_2+2}x^{2\alpha_1+1}dx}
&=\frac{\int_0^1F(x)x^{2\alpha_1+1}dx}{
	\int_0^1x^{2\alpha_1+1}dx}
	\frac{\int_0^1x^{2\alpha_1+1}dx}{
	\int_0^1(\rho_1(x))^{2\alpha_2+2}x^{2\alpha_1+1}dx}\\
&\to \frac{F(1)}{(\rho_1(1))^{2\alpha_2+2}} =F(1) \text{ as } \alpha_1\to\infty
\end{align*} 
as $\rho_1(1)=1$. Similarly,
\begin{align*}
&\frac{\left|\int_0^1G(x)x^{2\alpha_1+j_1+1}dx\right|^2}{
	\int_0^1(\rho_1(x))^{2\alpha_2+2}x^{2\alpha_1+1}dx
	\int_0^1(\rho_1(x))^{2\alpha_2+2j_2+2}x^{2\alpha_1+2j_2+1}dx} \\
&\to \frac{|G(1)|^2}{(\rho_1(1))^{2\alpha_2+2}(\rho_1(1))^{2\alpha_2+2j_2+2}} 
=|G(1)|^2
\end{align*} 
as $\alpha_1\to \infty.$ Then $\lambda_{\alpha}\to 0$ as $\alpha_1\to \infty$ 
implies that 
\[ |G(1)|^2 =\frac{F(1)}{2\alpha_2+2j_2+2}.\] 
That is, 
\begin{align*} 
	\left|\int_0^{1}\varphi(1,y)y^{2\alpha_2+j_2+1}dy\right|^2
=&\, \frac{1}{2\alpha_2+2j_2+2}
	\int_0^{1}|\varphi(1,y)|^2y^{2\alpha_2+1}dy \\
=&\, \int_0^{1}y^{2\alpha_2+2j_2+1}dy \int_0^{1}|\varphi(1,y)|^2y^{2\alpha_2+1}dy.
\end{align*}
Then for $f=\varphi(1,y)y^{\alpha_2+1/2}$ and $g= y^{\alpha_2+j_2+1/2}$ 
we have $|\langle f,g\rangle|=\|f\|\|g\|$. Since we have equality 
in the Cauchy-Schwarz inequality, $f$ and $g$ must be multiples of each other. 
That is, $\varphi(1,y)y^{\alpha_2+1/2}=ay^{\alpha_2+j_2+1/2}$ 
for some constant $a$. Hence $\varphi(1,y)=ay^{j_2}$. 
Namely, $\phi(z_0,z_2)=az_2^{j_2}$ is holomorphic.  
Therefore, $\phi$ is holomorphic along the vertical disc at $z_0$.

Next we consider the case $\alpha_2+j_2\notin \mathbb{N}_0$. 
Similar to the previous case we have   
\begin{align*}
\lambda_{\alpha}=& \int_{\D} |\varphi(z)|^{2} |e_{\alpha}(z)|^{2}dV(z) \\
=&\, \frac{(2\alpha_2+2)\int_0^1F(x)x^{2\alpha_1+1}dx}{
	\int_0^1(\rho_1(x))^{2\alpha_2+2}x^{2\alpha_1+1}dx}
	\to F(1) \text{ as } \alpha_1\to\infty.
\end{align*}
Then $\lambda_{\alpha}\to 0$ as $\alpha_1\to \infty$ 
implies that 
\[ F(1)=\int_0^1|\varphi(1,y)|^2y^{2\alpha_2+1}dy=0.\] 
Hence, $\varphi(1,y)=0$. That is, in either case $\phi$ is holomorphic 
along the vertical discs. Similarly, if $b\D$ contains horizontal discs 
a similar argument shows that $\phi$ is holomorphic along the 
horizontal discs. Therefore, 
$\phi\circ f$ is holomorphic for any holomorphic $f:\mathbb{D}\to b\D$.
\end{proof} 

We note that when $\phi\circ f$ is holomorphic for any holomorphic 
$f:\mathbb{D}\to b\D$ we say that $\phi$ is  holomorphic along discs in $b\D$. 

\begin{proof}[Proof of Theorem \ref{ThmMain2}]
Let us assume that $\widetilde{H^*_{\phi}H_{\phi}}(z)\to 0$ for $z\to b\D$. 
Then Lemma \ref{Lem2Quasihomo} implies that 
$\widetilde{H^*_{\phi_J}H_{\phi_J}}(z)\to 0$ for $z\to b\D$ for all $J$. 
Then Proposition \ref{Prop3} implies that $\phi_J$ is holomorphic 
along discs in $b\D$ for all $J$. Then the Ces\`aro's mean 
$\Lambda_k(\phi)$ holomorphic along discs in $b\D$ for all $k$ and, 
by Lemma \ref{LemSum}, $\{\Lambda_k(\phi)\}$ converges to $\phi$ 
uniformly on $\Dc$ as $k\to\infty$. Therefore, $\phi$ is holomorphic 
along discs in $b\D$. The converse of the theorem is 
due \cite[Theorem 1]{ClS18}. 
\end{proof}

\begin{proof}[Proof of Corollary \ref{CorHankel}]
(i) $\Leftrightarrow$ (ii) is true in general and  (i) $\Leftrightarrow$ (iv) is proven in 
\cite{ClS18}. Furthermore, (ii) $\Rightarrow$ (iii) is true because $k_z\to 0$ 
weakly as $z\to b\D$. Finally, (iii) $\Rightarrow$ (iv) by Theorem \ref{ThmMain2}.
\end{proof}

\section*{Acknowledgments}
This work was carried out while the first author visited University of Toledo, 
supported by The Scientific and Technological Research Council of Turkey 
(TUBITAK), under the grant 1059B192201455.


\end{document}